\newtheorem{theorem}{Theorem}[section]
\newtheorem{proposition}[theorem]{Proposition}
\newtheorem{corollary}[theorem]{Corollary}
\newtheorem{lemma}[theorem]{Lemma}
\newtheorem{preremark}[theorem]{Remark}
\numberwithin{equation}{section}
\newcommand{\R}{\mathbb R}
\newcommand{\dd} {\; \mathrm{d}}
\newcommand{\Sp}{\mathds{S}}
\title{Entropy dissipation estimates for the Boltzmann equation without cut-off}
\date{\today}
\author{Jamil Chaker}
\address{(\textnormal{J. Chaker}), Fakult\"at f\"ur Mathematik, Universit\"at Bielefeld, 33615 Bielefeld, Germany}
\email{jchaker@math.uni-bielefeld.de}
\author{Luis Silvestre}
\address{(\textnormal{L. Silvestre}), Mathematics Department, University of Chicago, Chicago, Illinois 60637, USA}
\email{luis@math.uchicago.edu}
\thanks{Jamil Chaker gratefully acknowledges funding by the Deutsche Forschungsgemeinschaft (SFB 1283/2 2021 - 317210226). Luis Silvestre is supported by NSF grants 2054888 and 1764285.}
\begin{document}
\begin{abstract}
We prove a bound on the entropy dissipation for the Boltzmann collision operator from below by a weighted $L^p$-Norm. The estimate holds for a wide range of potentials including soft potentials as well as very soft potentials. 
As an application, we study weak solutions to the spatially homogeneous Boltzmann equation and prove a weighted 
$L^1_t(L^p_v)$ estimate.
\end{abstract}
\maketitle

We study the entropy dissipation for the Boltzmann collision operator without cutoff for a wide range of power law potentials.
The main result of the present work gives a bound on the entropy dissipation from below by a weighted Lebesgue-Norm. 

The Boltzmann equation is a nonlinear integro-differential equation which describes the dynamics of a diluted gas. 
It is of the form
\begin{equation}\label{eq:inhomogboltzmann}
 \partial_t f(t,x,v)  + v\cdot\nabla_x f(t,x,v) = Q(f,f)(t,x,v), \quad f(0,x,v) = f_0(x,v),
 \end{equation}
where $t\geq 0$ and $x,v\in\R^d$ for $d\geq 2$. 
The solution $f$ describes the density of particles at time $t\geq 0$ with position $x\in\R^d$ having velocity $v\in\R^d$. 
While the left-hand side of \eqref{eq:inhomogboltzmann} describes the transport of particles, the right-hand side takes interactions between particles into account. 

In the special case of spacial homogeneity, the Boltzmann equation simplifies to
\begin{equation}\label{eq:homogboltzmann}
 \partial_t f(t,v)  = Q(f,f)(t,v), \quad f(0,v) = f_0(v).
 \end{equation}

The operator $Q$ on the right-hand side of the Boltzmann equation denotes the so-called Boltzmann collision operator, which acts on the function $f(t,x,\cdot)$ for fixed values of $t,x$ and is given by\footnote{Throughout the paper, we will drop the dependence on $t$ and $x$ of a function, whenever they are not directly involved.}
\[ Q(g,f)(v) = \int_{\R^{d}}\int_{\mathds{S}^{d-1}} (g(v_{\ast}')f(v') - g(v_{\ast})f(v))B(|v-v_{\ast}|,\cos\Theta)\, \dd\sigma \dd v_{\ast},  \]
where for $v,v_{\ast}\in\R^d$, $\sigma\in\mathds{S}^{d-1}$
\begin{center}
\begin{tabular}{ l l }
$\displaystyle v' = \frac{v+v_{\ast}}{2} + \frac{|v_{\ast}-v|}{2}\sigma$, \hspace*{2.5cm}  & $\displaystyle\cos\Theta= \sigma\cdot\frac{v_{\ast}-v}{|v_{\ast}-v|}$, \\
$\displaystyle v_{\ast}'= \frac{v+v_{\ast}}{2} - \frac{|v_{\ast}-v|}{2}\sigma.$ & \, 
\end{tabular}
\end{center}
The type of interactions of the particles is determined by the so-called collision kernel $B(|v-v_{\ast}|,\cos\Theta)$. A classical assumption on $B$ is the integrability of the collision kernel, referred to as Grad's cutoff assumption. In this paper, we study collision kernels that do not satisfy Grad's cutoff assumption.
To be more precise, we consider collision kernels of the form $B(|v-v_{\ast}|,\cos\Theta) = \Phi(|v-v_{\ast}|)b(\cos\Theta)$. %The function $\Phi$ is called kinetic factor and $b$ is referred to as the angular function. 
We assume
\begin{equation}\label{def:Phi}
\Phi(|v-v_{\ast}|) = c_{\Phi}|v-v_{\ast}|^{\gamma}
\end{equation} 
for some $c_{\Phi}>0$ and $b$ satisfies
\begin{equation}\label{def:b}
 c_b^{-1} \Theta^{-1-2s} \leq \sin(\Theta)^{d-2}b(\cos\Theta) \leq c_b \Theta^{-1-2s} \quad \text{for all } \Theta\in\left(0,\frac{\pi}{2}\right]
 \end{equation}
for some $c_b>0$, where $\gamma>-d$ and $s\in(0,1)$. The case $\gamma \geq 0$ is referred to as the hard potential case and $\gamma < 0$ as soft potential case. In particular, the sub-case $\gamma+2s<0$ is known as the very-soft potential case. 

Many questions about the regularity of solutions are open in the very soft potential range. For a recent review and open problems, see \cite{ImbSilReg20}. In that case, the reaction term in the collision operator (we write it $Q_2$ in \eqref{def:Q1Q2}) is more singular. It is difficult to control it with the diffusion part of the operator. In particular, there is no known method that leads to $L^{\infty}$ estimates in the very soft potential case, even for space homogeneous solutions. A similar difficulty arises for the Landau equation in the very soft potential range, and in particular for Coulomb potentials. Our main objective in this paper is to derive an entropy dissipation estimate that applies in the very soft potential range, similar to the well known result by L. Desvillettes \cite{DesvillettesColoumb} for the Landau equation with Coulomb potentials. Our main estimate is in a weighted $L^p$ space, with hopefully sharp asymptotics for large velocities.

The entropy dissipation for the Boltzmann collision operator is given by the following formula
\begin{equation}\label{eq:genentropydiss}
\begin{aligned}
D(f) &=-\left\langle Q(f,f),\ln f\right\rangle_{L^2(\R^d)}\\
& = \int_{\R^d}\int_{\R^d}\int_{\Sp^{d-1}} f(v_{\ast})f(v) \left[\ln(f(v))-\ln(f(v'))\right] B(|v-v_{\ast}|,\cos\Theta)\dd\sigma \dd v_{\ast} \dd v.
\end{aligned}
\end{equation}

The expression for $D(f)$ applies to $f$ as a function of $v$, for each \emph{frozen} values of $t$ and $x$. The following entropy dissipation formula applies to solutions of the Boltzmann equation:
\begin{equation} \label{eqn:dissipationintegratedintime}
\partial_t \iint_{\R^d \times \R^d} f \log f \dd v \dd x = -\int_{\R^d} D(f) \dd x.
\end{equation}
In the case of space-homogeneous solutions, the formula is simpler (and more powerful) since it does not involve integration with respect to $x$.

The expression for $D(f)$ is nonnegative. Due to the difficulty to obtain any other coercive quantities associated to the Boltzmann equation, it is interesting to study lower bounds for $D(f)$ that lead to a priori estimates in standard function spaces.

With the aim of sudying estimates for $D(f)$, we consider a nonnegative function $f = f(v) : \R^d \to [0,\infty)$. There is no point in keeping track of the dependence of $f$ with respect to $t$ and $x$ since $D(f)$ applies to $f$ as a function of $v$ only. Our estimate will depend on the mass, energy and entropy of $f$. To be more precise, it depends on upper bounds for the energy and entropy, and upper and lower bounds for the mass of $f$ of the following form:
\begin{align}
& 0 < m_0 \leq \int_{\R^d} f(v) \dd v \leq M_0, \label{ass:mass} \\
& \int_{\R^d} f(v) |v|^2 \dd v \leq E_0, \label{ass:energy} \\
& \int_{\R^d} f(v) \log f(v) \dd v \leq H_0. \label{ass:entropy}
\end{align}
For the spatially homogeneous Boltzmann equation, due to the conservation of mass and energy, and the monotonicity of the entropy, it suffices that these inequalities hold initially for them to hold for positive time. For the space-inhomogeneous Boltzmann equation, the estimates in this paper would apply provided that the inequalities above hold for every value of $t$ and $x$.

Before, we formulate the main result of the present paper, let us recall the weighted Lebesgue Norm. For $p\geq 1$ and $\ell\in\R$, we define
\[ \|g\|_{L^p_{\ell}} =  \left(\int_{\R^d} \langle v \rangle^{\ell p}|g(v)|^p\, \dd v \right)^{1/p}, \]
where $\langle v \rangle = (1+|v|^2)^{1/2}$.
 The main result of the present paper is the following entropy dissipation estimate:
\begin{theorem}\label{thm:entropydissipation}
Let $-d<\gamma \leq 2$ and $s\in(0,1)$. Let $f$ be a non-negative function satisfying \eqref{ass:mass}, \eqref{ass:energy} and \eqref{ass:entropy}. 
Assuming $\gamma \leq 0$, there is a finite constant $c>0$, depending on $d$, $b$, $\gamma$, $s$ and the macroscopic bounds $m_0,M_0,E_0$ and $H_0$, and $C>0$ depending on $d$, $b$, $\gamma$ and $s$ only, such that
\begin{equation}\label{eq:mainresult}
  D(f) \geq c\|f\|_{L^p_{-q}} - C M_0^2,
\end{equation} 
where $1/p = 1-2s/d$ and $q=2s/d - \gamma-2s$.

When $\gamma > 0$, a similar estimate follows but depending on a higher moment instead of $M_0$.
\begin{equation}\label{eq:mainresult-hardpotential}
  D(f) \geq c\|f\|_{L^p_{-q}} - C \left( \int_{\R^d} \langle v \rangle^\gamma f \dd v \right)^2.
\end{equation} 
\end{theorem}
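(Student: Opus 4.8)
The plan is to exploit the classical representation of the entropy dissipation as a weighted Fisher-type / $H^{s}$-norm of $\sqrt f$, and then convert that into the desired weighted $L^p$ bound by a Sobolev embedding, carefully tracking the weights. The first step is to recall the lower bound for $D(f)$ coming from the regularizing effect of the non-cutoff kernel: there is a constant depending on $d$, $b$, $s$ such that
\[
 D(f) \geq c \int_{\R^d}\int_{\R^d} \frac{\bigl(\sqrt{f(v')}-\sqrt{f(v)}\bigr)^2}{|v-v'|^{d+2s}} \, \Phi(|v-v_\ast|) \, \ldots \, \d v \, \d v_\ast,
\]
which, after integrating out the angular variable against $f(v_\ast)$ and using the mass/energy/entropy bounds to produce a coercive weight $\langle v\rangle^{\gamma}$ at each point (this is where \eqref{ass:mass}--\eqref{ass:entropy} enter, via the standard ``there is a cone of directions at $v$ along which $f(v_\ast)$ has controlled mass'' argument), gives a bound of the form $D(f) \gtrsim \|\langle v\rangle^{\gamma/2} \sqrt f\|_{\dot H^{s}}^2 - (\text{lower order})$. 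The lower-order term is where the $-CM_0^2$ (resp. $-C(\int \langle v\rangle^{\gamma} f)^2$) comes from: one does not get the full weighted $H^s$ seminorm for free, only after subtracting a multiple of a low-frequency/mass term, and in the hard-potential case the natural mass-type quantity that appears carries the weight $\langle v\rangle^{\gamma}$.

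Next I would apply the fractional Sobolev inequality in $\R^d$: $\|g\|_{L^{2^*_s}} \lesssim \|g\|_{\dot H^s}$ with $2^*_s = 2d/(d-2s)$, to $g = \langle v\rangle^{\gamma/2}\sqrt f$. This yields control of $\|\langle v\rangle^{\gamma/2}\sqrt f\|_{L^{2^*_s}}^2 = \|\langle v\rangle^{\gamma} f\|_{L^{2^*_s/2}} = \|f\|_{L^{p}_{\gamma/2 \cdot (2/p) \cdot \ldots}}$; unwinding the exponents, $p = 2^*_s/2 = d/(d-2s) = 1/(1-2s/d)$, exactly as claimed, and the weight exponent bookkeeping produces $\langle v\rangle^{\gamma}$ inside an $L^p$, i.e.\ $\|f\|_{L^p_{\gamma/p'}}$-type quantity — I then need to check that the stated $q = 2s/d - \gamma - 2s$ is what comes out; the discrepancy between $\gamma/p$ and $-q$ should be absorbed using the energy bound \eqref{ass:energy} together with an interpolation/Hölder step that trades polynomial weight for the energy moment, since $\langle v\rangle^{\gamma}$ and $\langle v\rangle^{-q}$ differ by a power controlled by the second moment. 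Finally, when $\gamma>0$ one repeats the same computation but keeps the weight on the subtracted term, giving \eqref{eq:mainresult-hardpotential}, and the restriction $\gamma \leq 2$ is precisely what makes the energy bound enough to close the weight interpolation.

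The main obstacle I expect is the first step: passing from the raw bilinear expression for $D(f)$ in \eqref{eq:genentropydiss} to a clean coercive weighted $\dot H^s$ statement for $\sqrt f$. This is the technical heart — one must handle the singular angular kernel, symmetrize in $v \leftrightarrow v'$, use the cancellation in $\ln f(v) - \ln f(v')$ versus $(\sqrt{f(v')}-\sqrt{f(v)})^2$ (the elementary inequality $(a-b)(\ln a - \ln b) \geq 4(\sqrt a - \sqrt b)^2$), and then show that the ``averaging'' of $f(v_\ast)$ against the kernel, under only the macroscopic bounds \eqref{ass:mass}--\eqref{ass:entropy}, produces a genuine pointwise-in-$v$ lower bound comparable to $\langle v\rangle^{\gamma}$ for the resulting multiplier — this last point is delicate in the soft potential regime because $\Phi(|v-v_\ast|) = c_\Phi|v-v_\ast|^{\gamma}$ is then a negative power and degenerates when $v_\ast$ is near $v$, so the entropy bound must be used to rule out concentration of $f(v_\ast)$ near $v$. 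Everything after that — the Sobolev embedding and the weight/exponent arithmetic — I expect to be routine, modulo the careful interpolation against the energy that converts the weight $\gamma/p$ into $-q$.
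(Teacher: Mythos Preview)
Your first reduction is essentially right: one replaces the kinetic factor $\Phi$ by a bounded-below $\psi$ (using nonnegativity of the integrand), uses the elementary inequality $x(\ln x-\ln y)\geq(\sqrt y-\sqrt x)^2-(y-x)$, and handles the linear remainder with the cancellation lemma to produce the $-CM_0^2$ (resp.\ $-C(\int\langle v\rangle^\gamma f)^2$) term. The gap is in the middle step. The cone-of-nondegeneracy argument does \emph{not} yield an isotropic bound of the form $K_f^\psi(v,v')\gtrsim\langle v\rangle^\gamma|v-v'|^{-d-2s}$; what it gives is $K_f^\psi(v,v')\gtrsim\langle v\rangle^{1+2s+\gamma}|v-v'|^{-d-2s}$ only for $v'-v$ in a cone of aperture $\approx\langle v\rangle^{-1}$. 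Both the extra $1+2s$ in the weight and the shrinking cone are essential, and they do not combine into a clean isotropic weighted $\dot H^s$ seminorm. Even granting your claimed isotropic bound, the Sobolev embedding applied to $\langle v\rangle^{\gamma/2}\sqrt f$ produces $\|f\|_{L^p_\gamma}$, whereas the target is $\|f\|_{L^p_{-q}}$ with $-q=\gamma+2s-2s/d>\gamma$, a \emph{stronger} norm at infinity. Interpolation against the energy moment can only lower the weight, so your proposed fix runs in the wrong direction.

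The paper bypasses Sobolev embedding entirely. After the same first reduction, it proves directly that the quadratic form $\iint(\sqrt{g(v')}-\sqrt{g(v)})^2K_f^\psi(v,v')\,\d v'\,\d v$ dominates $c\|g\|_{L^p_{-q}}$ by a pointwise argument: for each $v$, choose $R=R(v)$ so that $g(v)^pR^d\langle v\rangle^{-qp-1}\approx C\|g\|_{L^p_{-q}}^p$; then the cone inside $B_R(v)$ has volume $\approx R^d\langle v\rangle^{-1}$, most of it satisfies $g(v')\leq g(v)/2$, and the kernel there is $\gtrsim\langle v\rangle^{1+2s+\gamma}R^{-d-2s}$. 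Multiplying and integrating in $v$, the exponents $p$ and $q$ are exactly those for which the bookkeeping closes (one needs $p=1+2sp/d$ and $-qp=\gamma+2s-2s(qp+1)/d$); the anisotropy contributes the crucial extra factor $\langle v\rangle^{-1}$ that shifts the weight from $\gamma$ to $-q$. The authors remark that one might alternatively derive the result from their anisotropic $\dot N^{s,\gamma}$ estimate via a sharp weighted fractional Sobolev inequality, but such an inequality is not readily available, and in any case it would have to respect the Gressman--Strain anisotropic distance rather than the Euclidean one you invoke.
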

The proof of \autoref{thm:entropydissipation} takes advantage of the simple idea of using the non-negativity of the integrand in the entropy dissipation and replace the kinetic factor $\Phi$ by a smaller bounded function $\psi$ without the singularity on $v=v_{\ast}$ for $\gamma<0$. The estimate in a weighted $L^p$ space, with a precise exponent, follows from an explicit formula for integral quadratic forms. This estimate, given in Proposition \ref{prop:quadratictoLebesgue}, is one of the main novelties of this paper.

As a corollary of \autoref{thm:entropydissipation}, we see that H-solutions to the Cauchy problem \eqref{eq:homogboltzmann} are in a weighted Lebesgue space, that is 
\[ f\in L^{1}\left([0,T], L^p_{-q}(\R^d)\right),\]
 where $p$ and $q$ are as in \autoref{thm:entropydissipation}. This implies in particular that H-solutions to \eqref{eq:homogboltzmann} are weak solutions in the usual sense. For details, see Section \ref{sec:weaksol}.

The Landau equation can be derived as the grazing collision limit of the Boltzmann equation, see e.g. \cite{desvillettes1992asymptotics, goudon1997boltzmann, Villaninewclass, AlexandreVillani2002boltzmann, alexandre2004landau} and the references therein. Precisely, the Boltzmann collision operator $Q(f,f)$, properly normalized, converges to the Landau operator as $s \to 1$. In \cite{DesvillettesColoumb}, Desvillettes proves an entropy dissipation estimate for the Landau equation with Coulomb interaction and presents applications to weak solutions. The estimate we obtain in \autoref{thm:entropydissipation} is an analogous result but for the Boltzmann collision operator. If we take $d=3$ and $\gamma=-3$, and take $s \to 1$ in Theorem \ref{thm:entropydissipation}, we see that $p\to 3$ and $q\to 5/3$. While, the exponent $p$ coincides with the one in \cite{DesvillettesColoumb}, our exponent in the weight $q$ is improved ($L^3_{-5/3}$ as opposed to $L^3_{-3}$), suggesting that the weight exponent in \cite{DesvillettesColoumb} may not be optimal. It is worth noting that the proof given in \cite{DesvillettesColoumb} cannot be applied to the Boltzmann collision operator. The method in this paper is related to a simpler proof presented in \cite{golse2019partial}.

The entropy dissipation is an important quantity in the analysis of the Boltzmann equation. It has various applications such as the construction of renormalized solutions to the Cauchy problem for the Boltzmann equation, see \cite{DiPernaLionsCauchy}, or the introduction of H-solutions for the Boltzmann equation and Landau equations, see \cite{Villaninewclass}. There are various lower bounds for the entropy dissipation in the literature. However, they have serious limitations when $\gamma<0$ that we seek to overcome with the present result.

One of the first and best known entropy dissipation estimates appeared in \cite{AlexDesvVilWenn}. Their analysis applies when $\Phi$ is bounded below which, strictly speaking, is only the case when $\gamma = 0$. For $\gamma < 0$, some further analysis in \cite{AlexDesvVilWenn} leads to \emph{local} estimates restricted to bounded values of $v$. For other works on entropy dissipation estimates and their applications, see for instance \cite{VillaniReg, AlexandreVillani2002boltzmann, Des03Fourier, desvillettes2005trend, mouhot2006explicit} and the references therein.  
In \cite{Gressmann-Strain-Global2010, GressmanStrainGlobal}, Gressmann and Strain introduce a metric which captures the anisotropic structure of the Boltzmann operator. Using this metric and the associated spaces, the same authors obtain entropy dissipation estimates in \cite{Gressmann-Strain-2011} with sharp asymptotics for large velocities. The estimates in \cite{Gressmann-Strain-2011} depend on a quantity (that the authors call $C_g$) that is only controlled by moments of $f$ when $\gamma \geq 0$ (the hard potentials case). 

In addition of our main result, we present a refinement of the entropy dissipation result of \cite[Theorem 3]{Gressmann-Strain-2011} so that it applies to the soft potential range without resorting to higher integrability assumptions on $f$. We recall the anisotropic fractional Sobolev norm of Gressman and Strain:
\begin{equation}\label{def:weightedanisoSobolev}
 |f|_{{\dot{N}^{s,\gamma}}}^2 = \int_{\R^d}\int_{\R^d} \frac{(f(v')-f(v))^2}{d_{GS}(v,v')^{d+2s}} \left( \langle v \rangle \langle v' \rangle \right)^{(\gamma+2s+1)/2} \mathds{1}_{\{d_{GS}(v,v')\leq 1\}} \dd v' \dd v,
\end{equation}
where  
\begin{equation}\label{eq:GSdist}
 d_{GS}(v,v') = \sqrt{|v-v'|^2+\frac{1}{4}(|v|^2-|v'|^2)^2}
 \end{equation}
measures the distance in the lifted paraboloid $\{v\in\R^{d+1}\colon v_{d+1} = \frac12|(v_1,\dots,v_d)|^2 \}$.

We derive the following entropy dissipation estimate for soft potentials.

\begin{proposition}\label{prop:entropydissipation}
Let $-d<\gamma \leq 0$ and $s\in(0,1)$. Let $f$ be a non-negative function satisfying \eqref{ass:mass}, \eqref{ass:energy} and \eqref{ass:entropy}.
There is a constant $c>0$ depending on $d$, $s$, $\gamma$ and the macroscopic bounds $m_0,M_0,E_0$ and $H_0$, and a constant $C$ depending only on $d$, $s$ and $\gamma$ only, such that
\[ D(f) \geq c |\sqrt{f}|_{\dot{N}^{s,\gamma}}^2 - C M_0^2. \]
\end{proposition}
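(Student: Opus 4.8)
The strategy combines two ideas. First, the integrand in the \emph{symmetrized} form of the entropy dissipation is pointwise non-negative, which lets us replace the singular kinetic factor $\Phi(|v-v_*|)=c_\Phi|v-v_*|^{\gamma}$ by the bounded factor $\langle v-v_*\rangle^{\gamma}$ since $\gamma\le 0$. Second, the coercivity estimate of Gressman and Strain \cite[Theorem 3]{Gressmann-Strain-2011} still yields $|\sqrt f|_{\dot{N}^{s,\gamma}}^2$ when run with such a bounded kinetic factor, and then its error term is controlled by $M_0^2$ rather than by a higher integrability norm of $f$. Concretely, starting from the classical identity
\[
D(f)=\tfrac14\int_{\R^d}\int_{\R^d}\int_{\Sp^{d-1}}\bigl(f(v')f(v_*')-f(v)f(v_*)\bigr)\log\frac{f(v')f(v_*')}{f(v)f(v_*)}\,B\,\d\sigma\,\d v_*\,\d v ,
\]
whose integrand is non-negative, the elementary inequality $(X-Y)(\log X-\log Y)\ge 4(\sqrt X-\sqrt Y)^2$ together with $B\ge c_\Phi\langle v-v_*\rangle^{\gamma}b(\cos\Theta)$ gives
\[
D(f)\ \ge\ c_\Phi\int_{\R^d}\int_{\R^d}\int_{\Sp^{d-1}}\bigl(\sqrt{f(v')f(v_*')}-\sqrt{f(v)f(v_*)}\bigr)^2\langle v-v_*\rangle^{\gamma}b(\cos\Theta)\,\d\sigma\,\d v_*\,\d v .
\]
(If one prefers a one-particle-difference formulation, a further application of the cancellation lemma of \cite{AlexDesvVilWenn}, whose angular integral converges since $s<1$ and $\gamma>-d$, reduces this to $c_1\int\!\!\int\!\!\int f(v_*)(\sqrt{f(v')}-\sqrt{f(v)})^2\langle v-v_*\rangle^{\gamma}b\,\d\sigma\,\d v_*\,\d v$ at the cost of an error $\le C_1M_0^2$, which is clean because $\langle v-v_*\rangle^{\gamma}\le 1$.)

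It remains to bound the right-hand side from below by $c|\sqrt f|_{\dot{N}^{s,\gamma}}^2-CM_0^2$, which is the Gressman--Strain estimate applied with the kinetic factor $\langle v-v_*\rangle^{\gamma}$. Two points need to be checked. First, the norm produced is still $\dot{N}^{s,\gamma}$: the weight $(\langle v\rangle\langle v'\rangle)^{(\gamma+2s+1)/2}$ and the metric $d_{GS}$ are generated by the large-argument behaviour of the kinetic factor and by the grazing part $b(\cos\Theta)\sim\Theta^{-1-2s}$, and $\langle r\rangle^{\gamma}\sim r^{\gamma}$ as $r\to\infty$, so the desingularization at small $|v-v_*|$ only alters lower-order terms; the lower bound producing the weight uses $m_0$ and $E_0$ to confine a fixed fraction of the mass to a fixed ball, whence $\int_{\R^d} f(v_*)\langle v-v_*\rangle^{\gamma}\,\d v_*\gtrsim m_0\langle v\rangle^{\gamma}$ for large $|v|$. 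Second, the error term of \cite[Theorem 3]{Gressmann-Strain-2011} --- their quantity $C_g$ times a weighted $L^2$-norm of $\sqrt f$ --- is now $\le CM_0^2$: with a bounded kinetic factor one has $C_g\lesssim\sup_v\int_{\R^d}f(v_*)\langle v-v_*\rangle^{\gamma}\,\d v_*\le M_0$, while $\|\sqrt f\|_{L^2_{\gamma/2}}^2=\int_{\R^d}f\langle v\rangle^{\gamma}\,\d v\le M_0$ since $\gamma\le 0$. Combining the two displays with this bound gives the proposition.

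The main obstacle is carrying out the two checks in the previous paragraph: re-running the Gressman--Strain coercivity argument with the desingularized kinetic factor $\langle v-v_*\rangle^{\gamma}$ and verifying that replacing $|v-v_*|^{\gamma}$ by $\langle v-v_*\rangle^{\gamma}$ neither degrades the anisotropic norm on the left nor spoils the $CM_0^2$ control of the error term --- it is exactly here that the higher integrability of $f$ invoked in \cite{Gressmann-Strain-2011} for $\gamma<0$ becomes unnecessary, since the replacement is licensed by the non-negativity of the symmetrized integrand. By contrast, the reduction in the first paragraph (symmetrization, the elementary logarithmic inequality, and the optional cancellation lemma) is classical and routine.
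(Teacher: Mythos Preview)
Your first step---replacing $\Phi$ by a bounded minorant via non-negativity of the symmetrized integrand, then passing to the one-particle quadratic form $\int f(v_*)(\sqrt{f(v')}-\sqrt{f(v)})^2\,\psi\, b$ at the cost of a cancellation-lemma error $\lesssim M_0^2$---is exactly the paper's Lemma~\ref{lem:entropuquadratic} (with $\psi$ in place of your $\langle\cdot\rangle^\gamma$, which is immaterial).

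For the coercivity step you take a different route from the paper, and what you offer is a strategy rather than a proof. You propose to re-run \cite{Gressmann-Strain-2011} with the desingularized kinetic factor and argue heuristically that (i) the $\dot N^{s,\gamma}$ norm is preserved because the large-$r$ asymptotics are unchanged and (ii) the error parameter $C_g$ collapses to $M_0$. As you yourself flag, carrying out these checks is ``the main obstacle,'' and the one-line justifications you give (``only alters lower-order terms'') do not discharge it; the Gressman--Strain construction is a multi-step Carleman/dyadic argument whose dependence on the small-$r$ behaviour of the kinetic factor is not transparent. The paper does \emph{not} re-verify \cite{Gressmann-Strain-2011}. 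Instead it builds the coercivity independently: the kernel $K_f^\psi$ inherits a cone of nondegeneracy from \cite{SilvestreBoltzmann} (Lemma~\ref{lemma:kernelphi}), which feeds the abstract local coercivity theorem of \cite{chakersilvestrecoerc} (Lemma~\ref{lemma:localcoercivity}); the change of variables $T_0$ of \cite{imbertsilvestreglobal2022}, which realizes $d_{GS}$ as a rescaled Euclidean distance, converts this into a localized $\dot N^{s,\gamma}$ bound (Lemma~\ref{lemma:coerc1}); and a short iteration (Lemma~\ref{lem:rho32rho}, Corollary~\ref{cor:rhoisirrelevant}) enlarges the localization radius to $1$. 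Each step is either proved in the paper or quoted precisely, whereas your route defers the substance to an unexecuted re-reading of \cite{Gressmann-Strain-2011}.
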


The novelty of Proposition \ref{prop:entropydissipation} compared with \cite[Theorem 3]{Gressmann-Strain-2011} is that our negative error term is in terms of the mass of $f$ only. The result in the cited paper has a negative error term depending on higher integrability assumptions on the function. Roughly, they require $f \ast |v|^\gamma$ to be locally bounded in \cite[Assumption U]{Gressmann-Strain-2011}. There is no apparent upper bound in terms of the hydrodynamic quantities for their parameter $C_g$ when $\gamma<0$. Proposition \ref{prop:entropydissipation} implies that weak solutions to the space-homogeneous Boltzmann equation belong to $L^1([0,T], N^{s,\gamma})$, which was not available from earlier results in the literature.

While it is conceivable that one could potentially derive our main result in Theorem \ref{thm:entropydissipation} from Proposition \ref{prop:entropydissipation} combined with some sharp form of a weighted fractional Sobolev inequality (not readily available in the literature), we chose to prove Theorem \ref{thm:entropydissipation} directly, and then present an independent proof of Proposition \ref{prop:entropydissipation}. The direct proof of Theorem \ref{thm:entropydissipation} is relatively short and elegant. So, we think it is worth presenting Theorem \ref{thm:entropydissipation} as an independent result.

\subsection*{Notation}We write $a\lesssim b$ if there is a universal constant $c>0$ such that $a\leq cb$. The notation $a\gtrsim b$ means that $b\lesssim a$ and $a \approx b$ that $a\lesssim b$ and $a\gtrsim b$.

\subsection{Applications to spatially-homogeneous weak solutions}\label{sec:weaksol}

Before getting into the proof of our main results, we discuss an application of \autoref{thm:entropydissipation} for weak solutions to the spatially homogeneous Boltzmann equation. Let us have a look at the weak formulation of the Boltzmann collision operator. 
 In the following, let $f\in L^\infty([0,\infty); L_2^1(\R^d)) \cap C([0,\infty); \mathcal{D}'(\R^d))$ be a nonnegative function satisfying \eqref{ass:mass}, \eqref{ass:energy} and \eqref{ass:entropy}.

In \cite{Villaninewclass}, Villani introduces a class of weak solutions, called H-solutions, to the spatially homogeneous Boltzmann equation with bounded entropy dissipation. A solution in this class might not be a weak solution in the usual sense. As explained in \cite[Section 7, Application 2]{AlexDesvVilWenn} and \cite{Villaninewclass}, this problem appears because of the lack of an a priori estimate in the very soft potential case of the form
\begin{equation}\label{eq:apropri}
 \int_{0}^T \int_{B_R} \int_{B_R} f(t,v) f(t,v_{\ast})|v-v_{\ast}|^{\gamma+2} \, \dd v \, \dd v_{\ast} \, \dd t < \infty. 
\end{equation}
It is mentioned in \cite{AlexDesvVilWenn} that the (local) entropy dissipation estimate shows that H-solutions are weak solutions in the usual sense when $\gamma+2s \geq d-2$. The computation is sketched without explicit details. Using our estimate in Theorem \ref{thm:entropydissipation}, we show that \eqref{eq:apropri} holds whenever $\gamma+2s > -2$, covering the whole physical range of exponents. There is no fundamental difference between the computation presented here and the one proposed in \cite{DesvillettesColoumb}. It is not clear why they stated a suboptimal range in \cite{AlexDesvVilWenn}, suggesting that it is possibly a typo in the paper. We explain the computation explicitly below.

One important ingredient in the proof of \eqref{eq:apropri} is that H-solutions are in a weighted Lebesgue space. It follows immediately from \autoref{thm:entropydissipation} combined with the entropy dissipation formula \eqref{eqn:dissipationintegratedintime} (without integrating in space).

\begin{corollary}\label{corollary:weightedlebesgue}
Let $T>0$, $-d<\gamma\leq 2$ and $s\in(0,1)$. 
Let $f$ be a non-negative H-solution to the Cauchy problem \eqref{eq:homogboltzmann} with initial datum $f_0$. Assume $f_0$ satisfies \eqref{ass:entropy}. \\
Then $f\in L^{1}\left([0,T], L^p_{-q}(\R^d)\right)$, where $1/p = 1-2s/d$ and $q=2s/d - \gamma-2s$.
\end{corollary}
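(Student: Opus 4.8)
The plan is to combine the entropy dissipation formula \eqref{eqn:dissipationintegratedintime} in the spatially homogeneous setting with the lower bound from \autoref{thm:entropydissipation}. First I would recall that for an H-solution, by definition the entropy dissipation is integrable in time and the standard H-theorem computation gives, for every $T>0$,
\[ \int_0^T D(f(t,\cdot))\, \d t = \int_{\R^d} f_0 \log f_0\, \d v - \int_{\R^d} f(T,\cdot) \log f(T,\cdot)\, \d v. \]
The right-hand side is finite: the first term is bounded by $H_0$ via the assumption on $f_0$, and the negative part of $x\log x$ together with the finiteness of mass and second moment (which are conserved for H-solutions) gives a lower bound on $\int f(T,\cdot)\log f(T,\cdot)\,\d v$ of the form $-C(d)(M_0 + E_0)$. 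Hence $\int_0^T D(f(t,\cdot))\, \d t < \infty$, with a bound depending only on $T$, $d$, and the hydrodynamic quantities $m_0, M_0, E_0, H_0$ (the latter being propagated in time by conservation of mass/energy and monotonicity of entropy, exactly as remarked after \eqref{ass:entropy}).

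Next I would apply \autoref{thm:entropydissipation} at (almost) every fixed time $t\in[0,T]$. Since for a.e.\ $t$ the slice $f(t,\cdot)$ is a nonnegative function satisfying \eqref{ass:mass}, \eqref{ass:energy}, \eqref{ass:entropy} with the propagated constants, the theorem yields
\[ c\, \|f(t,\cdot)\|_{L^p_{-q}} \leq D(f(t,\cdot)) + C M_0^2 \]
in the soft potential case $\gamma\le 0$ (and with $M_0^2$ replaced by $(\int \langle v\rangle^\gamma f\,\d v)^2 \le (\langle\cdot\rangle^{\gamma}$ bounded by a moment that is itself controlled by $M_0+E_0$) when $0<\gamma\le 2$, so in all cases the error term is a time-independent constant). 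Integrating this inequality in $t$ over $[0,T]$ gives
\[ c\int_0^T \|f(t,\cdot)\|_{L^p_{-q}}\, \d t \leq \int_0^T D(f(t,\cdot))\, \d t + C M_0^2\, T < \infty, \]
which is precisely the assertion $f\in L^1([0,T], L^p_{-q}(\R^d))$ with $1/p = 1-2s/d$ and $q = 2s/d - \gamma - 2s$.

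The only genuinely delicate point is measurability/regularity: one must know that $t\mapsto \|f(t,\cdot)\|_{L^p_{-q}}$ is measurable and that the slice $f(t,\cdot)$ genuinely satisfies the three hydrodynamic bounds for a.e.\ $t$, so that \autoref{thm:entropydissipation} may be invoked pointwise in time and then integrated. This follows from $f\in L^\infty([0,\infty);L^1_2)\cap C([0,\infty);\mathcal D')$ together with the conservation laws for H-solutions, plus Fatou/Tonelli to justify bringing the time integral inside. I expect this bookkeeping — rather than any new estimate — to be the main (and only mild) obstacle; everything else is a direct substitution of \autoref{thm:entropydissipation} into the time-integrated H-theorem identity.
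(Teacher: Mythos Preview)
Your proposal is correct and follows exactly the paper's approach: bound $\int_0^T D(f(t,\cdot))\,\d t$ via the H-theorem identity for H-solutions, then apply \autoref{thm:entropydissipation} pointwise in $t$ and integrate. The paper's proof is a terse two-sentence version of what you wrote, and the additional bookkeeping you flag (measurability, propagation of the hydrodynamic bounds, the $\gamma>0$ error term) is routine and handled implicitly there.
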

\begin{proof}
By definition, H-solutions satisfy the space-homogeneous form of \eqref{eqn:dissipationintegratedintime}. We get that $\int_0^T D(t) \dd t \leq \int f_0 \log f_0 \dd v$. The corollary follows applying Theorem \ref{thm:entropydissipation}.
\end{proof}

We use \autoref{corollary:weightedlebesgue} to show that H-solutions are weak solutions in the usual sense by proving \eqref{eq:apropri}.

\begin{corollary}\label{cor:aprioriestimate}
Let $T>0$, $-d<\gamma\leq 0$ and $s\in(0,1)$, so that $\gamma+2s > -2$. 
Let $f$ be a non-negative H-solution to the Cauchy problem \eqref{eq:homogboltzmann} with initial datum $f_0$. Assume $f_0$ satisfies \eqref{ass:entropy}. Then $f$ satisfies \eqref{eq:apropri}.
\end{corollary}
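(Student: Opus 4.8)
The plan is to bound the integrand in \eqref{eq:apropri} by splitting the factor $|v-v_\ast|^{\gamma+2}$ according to whether $v$ and $v_\ast$ are close or far apart, and to estimate the two resulting contributions using, respectively, the weighted $L^p$ bound from Corollary \ref{corollary:weightedlebesgue} and the conservation of mass and energy. First I would fix $t\in[0,T]$ and write, for $v,v_\ast\in B_R$,
\[
 |v-v_\ast|^{\gamma+2} = |v-v_\ast|^{\gamma+2}\mathds{1}_{\{|v-v_\ast|\le 1\}} + |v-v_\ast|^{\gamma+2}\mathds{1}_{\{|v-v_\ast|> 1\}}.
\]
On the far region $|v-v_\ast|>1$ the kernel is harmless: since $\gamma+2\le 2$ and $v,v_\ast\in B_R$, we have $|v-v_\ast|^{\gamma+2}\lesssim R^{2}$ (or $\lesssim 1$ if $\gamma+2\le 0$), so this contribution is controlled by $R^{2}\big(\int_{B_R} f\,\d v\big)^2 \le R^2 M_0^2$, which is integrable in $t$ over $[0,T]$. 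The work is entirely in the near region $|v-v_\ast|\le 1$, where $|v-v_\ast|^{\gamma+2}$ is a locally integrable singular weight (note $\gamma+2>-2+2s>\,$ something, but in any case $\gamma+2>\gamma>-d$, so $|z|^{\gamma+2}$ is locally integrable in $\R^d$).

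For the near piece I would apply a Hardy–Littlewood–Sobolev / Young-type convolution estimate. Write
\[
 \int_{B_R}\int_{B_R} f(t,v)f(t,v_\ast)|v-v_\ast|^{\gamma+2}\mathds{1}_{\{|v-v_\ast|\le1\}}\,\d v_\ast\,\d v
 \le \int_{\R^d} f(t,v)\,\big( f(t,\cdot)\ast k\big)(v)\,\d v,
\]
where $k(z)=|z|^{\gamma+2}\mathds{1}_{\{|z|\le 1\}}$. By Hölder, this is at most $\|f(t)\|_{L^p_{-q}}\,\big\|\langle v\rangle^{q} (f(t)\ast k)\big\|_{L^{p'}}$ where $p'$ is the conjugate exponent of $p$; here I would use that $\langle v\rangle^{q}$ is comparable to a constant times $\langle v_\ast\rangle^{q}$ when $|v-v_\ast|\le 1$ (since $q$ is a fixed real number and the two points are at bounded distance), so the weight can be moved onto $f$ inside the convolution, giving a bound by $\|f(t)\|_{L^p_{-q}}\cdot \|(\langle\cdot\rangle^{q} f(t))\ast k\|_{L^{p'}}$. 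Then Young's inequality gives $\|(\langle\cdot\rangle^q f)\ast k\|_{L^{p'}}\le \|k\|_{L^{a}}\,\|\langle\cdot\rangle^q f\|_{L^{b}}$ with $1+1/p'=1/a+1/b$. The key arithmetic to check is that one can choose $a,b$ with $\|k\|_{L^a}<\infty$ (i.e. $a(\gamma+2)>-d$, which holds for $a$ close to $1$ because $\gamma+2>-d$; here the hypothesis $\gamma+2s>-2$, equivalently $\gamma+2>2-2s>0$ when combined with… — more simply, we just need $a>d/(d+\gamma+2)$, harmless) and simultaneously $b\le p$, so that $\|\langle\cdot\rangle^q f\|_{L^b}$ is controlled by interpolating between $\|f\|_{L^1_2}$ (bounded by mass and energy) and $\|f\|_{L^p_{-q}}$; the exponent $b=p$ itself works since $k\in L^1(\R^d)$ precisely when $\gamma+2>-d$, in which case $a=1$, $b=p'/\,$… wait — with $a=1$ we need $b=p'$, and $p'\ge p$ only when $p\le 2$, so a genuine interpolation step between $L^1$ and $L^p$ is needed when $p>2$. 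In all cases, after interpolation we obtain a pointwise-in-$t$ bound of the form
\[
 \int_{B_R}\int_{B_R} f f_\ast |v-v_\ast|^{\gamma+2}\,\d v_\ast\,\d v \;\lesssim\; 1 + \|f(t)\|_{L^p_{-q}}^{\theta}
\]
for some $\theta\in[1,2]$ with constants depending on $d,\gamma,s,R,m_0,M_0,E_0$.

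The remaining step is to integrate in time. If $\theta=1$, Corollary \ref{corollary:weightedlebesgue} gives $\int_0^T\|f(t)\|_{L^p_{-q}}\d t<\infty$ directly. If $\theta>1$ one cannot integrate $\|f(t)\|_{L^p_{-q}}^\theta$ using only the $L^1_t$ bound, so I would instead split the weight more carefully to keep the power of $\|f(t)\|_{L^p_{-q}}$ equal to one: absorb just enough of $|v-v_\ast|^{\gamma+2}$ and of the spatial localization $\mathds{1}_{B_R}$ into the $L^1_2$-controlled factor so that the other factor is exactly $\|f(t)\|_{L^p_{-q}}$ to the first power. Concretely, write $f f_\ast = (f)(f_\ast)$ and bound one of the two copies of $f$ (say the $v_\ast$ one, integrated against the singular kernel over $B_R$) pointwise in $v$ by something like $\langle v\rangle^{-q}$ times a constant depending on $R$ and on $\int f\,\d v,\int f|v|^2\,\d v$ — this uses that $f(t,\cdot)\ast (|z|^{\gamma+2}\mathds{1}_{|z|\le1})$ evaluated on $B_R$ is bounded by Young with the $L^1$ mass bound — leaving exactly $\int_{B_R} f(t,v)\langle v\rangle^{-q}\,\d v \le \|f(t)\|_{L^p_{-q}}\cdot\big(\int_{B_R}\d v\big)^{1/p'}\lesssim R^{d/p'}\|f(t)\|_{L^p_{-q}}$. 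This keeps the time-dependence linear in $\|f(t)\|_{L^p_{-q}}$, and then $\int_0^T$ is finite by Corollary \ref{corollary:weightedlebesgue}. The main obstacle, and the place that needs care, is precisely this bookkeeping: making sure the singular kernel $|z|^{\gamma+2}$ in the near region is integrated against a copy of $f$ whose only needed bound is the $L^1_2$ mass/energy bound (which requires $\gamma+2>-d$, automatic) while the \emph{other} copy of $f$ is paired with the bounded region $B_R$ so that the weighted $L^p$ norm enters only to the first power; the hypothesis $\gamma+2s>-2$ enters to guarantee $q=2s/d-\gamma-2s$ and the exponent arithmetic are in the admissible range so that $\langle v\rangle^{-q}$ is genuinely integrable against $f\in L^p_{-q}$ on bounded sets and the final time integral converges.
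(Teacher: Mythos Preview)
Your overall plan—split near/far and invoke Corollary~\ref{corollary:weightedlebesgue} for the near piece—is correct, and the far region is fine. But the near-region argument wanders without landing on the right exponents, and your fallback attempt contains a genuine error.

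In your Young step the correct choice is $a=p'$ and $b=1$, not $a=1$, $b=p'$. Since everything lives in $B_R$ (or $B_{R+1}$), the weight $\langle\cdot\rangle^q$ is bounded and irrelevant, so $\|\langle\cdot\rangle^q f\|_{L^1(B_{R+1})}\lesssim_R\|f\|_{L^1}\le M_0$. The kernel factor $\|k\|_{L^{p'}}$ is finite precisely when $(\gamma+2)p'>-d$; since $1/p'=2s/d$ this reads $\gamma+2>-2s$, i.e.\ $\gamma+2s>-2$. This is exactly where the hypothesis enters, and you never pin it down (your remarks about ``$a$ close to $1$'' and ``$\gamma+2>-d$'' miss the point entirely). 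With these exponents the bound is $C_R\,\|f\|_{L^1}\,\|f(t)\|_{L^p(B_R)}$, linear in the $L^p$ norm, hence integrable in $t$. No interpolation and no $\theta>1$ arise.

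The paper does exactly this, written as a direct H\"older estimate rather than Young: pull one factor of $f$ out in $L^1$, then bound
\[
\int_{B_R}f(v_\ast)\,|v-v_\ast|^{\gamma+2}\,\d v_\ast\le\|f(t,\cdot)\|_{L^p(B_R)}\,\big\||v-\cdot|^{\gamma+2}\big\|_{L^{p'}(B_R)}
\]
uniformly in $v\in B_R$, and check that the last factor is finite by the same arithmetic $(\gamma+2)p'>-d$.

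Your second attempt is wrong as stated: you claim that $\int_{B_R}f(v_\ast)|v-v_\ast|^{\gamma+2}\mathds{1}_{\{|v-v_\ast|\le1\}}\,\d v_\ast$ is bounded pointwise in $v$ using only $\|f\|_{L^1}$ (``Young with the $L^1$ mass bound''). When $\gamma+2<0$ this is false—convolving an $L^1$ function with an unbounded kernel does not yield an $L^\infty$ function. Any pointwise bound here requires $\|f\|_{L^p}$, which is precisely the first approach done correctly.
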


\begin{proof}
\begin{align*}
 \int_{0}^T & \int_{B_R}  \int_{B_R} f(t,v) f(t,v_{\ast}) |v-v_{\ast}|^{\gamma+2}\mathds{1}_{\{|v-v_{\ast}| \leq 1\}}) \, \dd v \, \dd v_{\ast} \, \dd t \\
& \leq \int_{0}^T \|f\|_{L^1} \sup_{v \in B_R} \int_{B_R} f(t,v_{\ast}) |v-v_{\ast}|^{\gamma+2} \, \dd v_{\ast} \, \dd t. \\
\intertext{We apply H\"older's inequality with $1/p = 1 - 2s/d$ as in Corollary \ref{corollary:weightedlebesgue}}
& \leq \int_{0}^T \|f(t,\cdot)\|_{L^1} \|f(t,\cdot)\|_{L^p(B_R)} \sup_{v \in B_R} \||v-\cdot|^{\gamma+2} \|_{L_{p'}(B_R)} \dd t \\
&\leq \|f\|_{L^\infty([0,T],L^1(B_R))} \|f\|_{L^1([0,T],L^p(B_R))} \sup_{v \in B_R} \||v-\cdot|^{\gamma+2} \|_{L_{p'}(B_R)}.
\end{align*}
It only remains to check whether the last factor is finite. We have
\begin{align*}
\||v-\cdot|^{\gamma+2} \|_{L_{p'}(B_R)} =\left( \int_{B_R} |v-v_\ast|^{(\gamma+2)\frac{d}{2s}} \, \dd v_\ast \right)^{\frac{2s}d}. 
\end{align*}
The integral is finite provided that $(\gamma+2)d/(2s) > -d$. This is clearly the case when $\gamma+2s > -2$.
\end{proof}

\section{Entropy dissipation estimates}
The Boltzmann collision operator clearly plays a central role in our analysis, since the entropy dissipation is defined through it. In the following, we will briefly discuss the decomposition of the operator and some selected properties. 

The Boltzmann collision operator $Q(f,g)$ can be decomposed into the sum of an integro-differential operator $Q_1(f,g)$ and a lower order term $Q_2(f,g)$ (see \cite{SilvestreBoltzmann}):
\begin{equation}\label{def:Q1Q2}
Q_1(f,g)(v) := (\mathcal{L}_{K_f}g)(v) \quad \text{and} \quad Q_2(f,g)(v) = g(v)\int_{\R^d}f(v-w)\widetilde{B}(|w|)\, \dd w.
\end{equation}
The integro-differential operator $\mathcal{L}_{K_f}$ is defined by
\[ \mathcal{L}_{K_f}g(t,v) = pv\int_{\R^d} (g(t,v')-g(t,v))K_{f}(t,v,v')\, \dd v',\]
where pv denotes the Cauchy principal value around $v\in\R^d$. The kernel $K_f(t,v,v')$ depends on the 
function $f$ and is given by the formula
\begin{equation}\label{def:kernel}
\begin{aligned}
 K_f(t,v,v') &= \frac{2^{d-1}}{|v'-v|}\int_{w\perp (v'-v)} f(t,v+w) \, B(r,\cos\Theta) r^{-d+2}\, \dd w,
 \end{aligned}
 \end{equation}
where
 \begin{equation}\label{def:rvwreps}
\begin{aligned}
r&=\sqrt{|v'-v|^2+|w|^2}, \qquad & \cos(\Theta/2)= \frac{|w|}{r}, \\
v_{\ast}'&= v+w, & v_{\ast}= v'+w.
\end{aligned}
\end{equation}
While $Q_1$ represents the singular part of the collision operator, the part $Q_2$ is of lower order. The lower order term can be handled using the cancellation lemma \cite[Lemma 1]{AlexDesvVilWenn}.
The function $\widetilde{B}$ in \eqref{def:Q1Q2} is given by
\begin{align*}
\widetilde{B}(z)& =\int_{\Sp^{d-1}}(2^{d/2}(1-\sigma\cdot e)^{-d/2}\left(B\left(\sqrt{2} z/(1-\sigma\cdot e),\cos\Theta\right) -B(z,\cos\Theta)\right) \, \dd \sigma\\
& =C_b \Phi(z)=C_bc_{\Phi}|z|^{\gamma},
\end{align*} 
where $C_b$ is a positive constant depending on the angular function $b$. For details on the decomposition of the Boltzmann collision operator, see \cite{SilvestreBoltzmann}.

The idea for controlling the singularity of $\tilde B$ near $v=v_{\ast}$ for $\gamma<0$ is to introduce an auxiliary collision kernel, where we replace the kinetic factor $\Phi$ by a smaller bounded function $\psi$ in which we cut out this singularity. 
Since the integrand of the entropy dissipation is non-negative, it can be bounded from below by the same expression with $B$ replaced by a smaller collision kernel.
For a given function $\psi$ on $\R^d$, we define the generalized collision kernel $B^{\psi}$ by
\[ B^{\psi}(|v-v_{\ast}|,\cos\Theta) = \psi(|v-v_{\ast}|)b(\cos\Theta). \]
The auxiliary collision kernel $B^{\psi}$ and the collision kernel $B$ differ only in the fact that we replace the kinetic factor $\Phi$ with the function $\psi$. 

By simply replacing $\Phi$ by $\psi$,  we can define the generalized kernel $K_f^{\psi}$ by
\begin{equation}\label{def:genkernel}
 \mathcal{K}_f^{\psi}(t,v,v') = \frac{2^{d-1}}{|v'-v|}\int_{w\perp (v'-v)} f(t,v'+w) \, \psi(r) b(\cos\Theta) r^{-d+2 }\, \dd w,
 \end{equation}
where $r$ and $\cos\Theta$ are defined in \eqref{def:rvwreps} and $\widetilde{B}^{\psi}$ by
\begin{equation}\label{def:genB}
\widetilde{B}^{\psi}(z) = C_b \psi(z).
 \end{equation}
This leads to the decomposition of the auxiliary collision operator $Q^{\psi}(f,g)$ (with the collision kernel $B$ replaced by $B^{\psi}$) into the sum of an integro-differential operator $Q^{\psi}_1(f,g)$ and a lower order term $Q^{\psi}_2(f,g)$, where the operators $Q_1^\psi$ and $Q_2^\psi$ 
are defined as in \eqref{def:Q1Q2} with $K_f$ and $\widetilde{B}$ replaced by $K_f^{\psi}$ resp. $\widetilde{B}^{\psi}$.

We define the auxiliary function $\psi:\R^d\to\R$ to be non-negative function satisfying $\psi\leq \Phi$ and:
\begin{equation}\label{def:psi}
\begin{aligned}
\text{if } \gamma<0:  \quad & \begin{cases}1\leq \psi(|z|) \leq 2 \quad &\text{if } |z|\leq 1, \\ \psi(|z|)=\Phi(|z|) &  \text{if } |z|> 1,
\end{cases} \\
\text{if } \gamma \geq  0:  \quad & \psi(|z|) =\Phi(|z|) \quad \text{for all } z\in\R^d.
\end{aligned}
\end{equation}
Note that by this choice, for any value of $\gamma$,
\[ \int_{\R^d} \int_{\R^d} f(v) f(v_{\ast}) \psi(|v-v_{\ast}|) \, \dd v_{\ast}\, \dd v \lesssim \begin{cases} 
M_0^2 & \text{ if } \gamma \leq 0, \\
\left( \int f \langle v \rangle^\gamma \dd v \right)^2 & \text{ if } \gamma > 0.
\end{cases} \]
The entropy dissipation is naturally connected to a quadratic form, coming from the singular part of the collision operator and a lower order term.
\begin{lemma}\label{lem:entropuquadratic}
Let $-d<\gamma\leq 2$ and $s\in(0,1)$. Let $\psi$ be a non-negative function satisfying $\psi\leq \Phi$ and \eqref{def:psi} and let $f$ be a non-negative function satisfying \eqref{ass:mass}, \eqref{ass:energy} and \eqref{ass:entropy}. 
There is an universal constant $C>0$ such that
\begin{equation}
D(f) \geq \int_{\R^d}\int_{\R^d} \left(\sqrt{f(v')} - \sqrt{f(v)}\right)^2 K^{\psi}_f(v,v') \, \dd v' \, \dd v  - C \left( \int_{\R^d} f(v) \langle v \rangle^{\gamma_+} \dd v \right)^2.
\end{equation}
\end{lemma}
\begin{proof} 
Using the non-negativity of the entropy dissipation $D(f)$ and $\psi\leq \Phi$, we get
\begin{align*}
D(f) & = \frac 12 \int_{\R^d}\int_{\R^d}\int_{\Sp^{d-1}} (ff_\ast - f' f'_\ast) \ln\left(\frac {ff_\ast}{f'f'_\ast} \right) B(|v-v_{\ast}|,\cos\Theta)\dd\sigma \dd v_{\ast} \dd v \\
& \geq \frac 12 \int_{\R^d}\int_{\R^d}\int_{\Sp^{d-1}} (ff_\ast - f' f'_\ast) \ln\left(\frac {ff_\ast}{f'f'_\ast} \right) \psi(|v-v_{\ast}|) b(\cos\Theta)\dd\sigma \dd v_{\ast} \dd v \\
& = \int_{\R^d}\int_{\R^d}\int_{\Sp^{d-1}} f(v_{\ast})f(v) \left[\ln(f(v))-\ln(f(v'))\right] \psi(|v-v_{\ast}|) b(\cos\Theta)\dd\sigma \dd v_{\ast} \dd v \\
& \geq \int_{\R^d}\int_{\R^d}\int_{\Sp^{d-1}} f(v_{\ast}) \left(\sqrt{f(v')} - \sqrt{f(v)}\right)^2\psi(|v-v_{\ast}|) b(\cos\Theta)\dd\sigma \dd v_{\ast} \dd v \\
& \quad -\int_{\R^d}\int_{\R^d}\int_{\Sp^{d-1}} f(v_{\ast}) \left(f(v') - f(v)\right)\psi(|v-v_{\ast}|) b(\cos\Theta)\dd\sigma \dd v_{\ast} \dd v \\
& = I_1 - I_2.
\end{align*}
In the second estimate, we used the inequality $x(\ln x - \ln y) \geq \left(\sqrt{y} - \sqrt{x}\right)^2 - (y-x)$ for all $x,y\geq 0$ (See \cite{AlexDesvVilWenn}).
By the definition of the kernel $K_f^{\psi}(v,v')$, the term $I_1$ can be written as
\[ I_1 = \int_{\R^d}\int_{\R^d} \left(\sqrt{f(v')} - \sqrt{f(v)}\right)^2 K_f^{\psi}(v,v') \, \dd v' \, \dd v. \]
For the term $I_2$  we use the cancellation lemma (See \cite{AlexDesvVilWenn} or \cite{SilvestreBoltzmann} for details), which leads to
\begin{align*}
I_2  &= C \int_{\R^d}\int_{\R^d} f(v)f(v_{\ast}) \psi(|v-v_{\ast}|) \dd v_{\ast} \dd v \\ 
&\leq C \int_{\R^d}\int_{\R^d} f(v)f(v_{\ast}) \langle v-v_{\ast} \rangle^\gamma \dd v_{\ast} \dd v \\ 
 &\leq C \begin{cases} 
M_0^2 & \text{ if } \gamma \leq 0, \\
\left( \int f \langle v \rangle^\gamma \dd v \right)^2 & \text{ if } \gamma > 0.
\end{cases}
\end{align*}
Here $C>0$ is a finite universal constant.
\end{proof}

Our main result \autoref{thm:entropydissipation} will be derived from \autoref{lem:entropuquadratic} and the following estimate of the quadratic form from below by a weighted Lebesgue norm.

\begin{proposition}\label{prop:quadratictoLebesgue}
Let $\gamma>-d$ and $s\in(0,1)$. Let $\psi$ be a non-negative function satisfying \eqref{def:psi}, let $f$ be a non-negative function satisfying \eqref{ass:mass}, \eqref{ass:energy} and \eqref{ass:entropy}. Let $p$, $q$ and $r$ be the exponents given by $1/p = 1-2s/d$, $q=2s/d - \gamma-2s$, and $r = -\gamma-d+1$. Assume $g \in L^p_{-q}(\R^d)$ and let $a = C_1 \|g\|_{L^p_{-q}}$, for some large constant $C_1$. There is a constant $c_0>0$, such that
\[  \int_{\R^d}\int_{\R^d} \left(\sqrt{g(v')} - \sqrt{g(v)}\right)^2 K^{\psi}_f(v,v') \, \dd v' \, \dd v \geq c_0 \|g\|_{L^p_{-q}}^{1-p} \int_{\{g(v) \geq a \langle v \rangle^r \}} |g(v)|^p \langle v \rangle^{-qp} \dd v. \]
Moreover, when $q \leq 0$ we can take $C_1=0$ and the right hand side is simply $\|g\|_{L^p_{-q}}$.

Here, the constants $C_1$ and $c_0$ depend only on the dimension $d$, $s$, $\gamma$ and the macroscopic bounds $m_0,M_0,E_0$ and $H_0$
\end{proposition}
The proof of Proposition \ref{prop:quadratictoLebesgue} is postponed to later in this section, after the next four lemmas.

Since the estimate in \autoref{prop:quadratictoLebesgue} has no restrictions on $\gamma>-d$ and $s\in(0,1)$, 
it covers soft as well as hard potentials for the Boltzmann collision operator. It is perhaps most interesting that it works in the case of very-soft potentials $\gamma+2s<0$. Note that outside of that range, if $\gamma+2s>2s/d$, the exponent $q$ changes its sign. We have $q>0$ in the very soft potential range.

An essential tool for the proof of \autoref{prop:quadratictoLebesgue} are cones of nondegeneracy introduced in \cite{SilvestreBoltzmann}.
Before we recall the cones of nondegeneracy and some important properties, we first give a lower bound on the generalized kernel.
\begin{lemma}\label{lemma:kernelpsi}
Let $-d<\gamma<0$ and $s\in(0,1)$. Let $\psi$ be a non-negative function satisfying \eqref{def:psi}.
Then 
\begin{equation}\label{Kernelpsi}
\begin{aligned}
K^{\psi}_f(v,v') \gtrsim |v-v'|^{-d-2s}\int_{w\perp (v'-v)} f(v'+w)  \min\Big( |w|^{\gamma+2s+1}, |w|^{2s+1}\Big) \, \dd w.
 \end{aligned}
\end{equation}
\end{lemma}
Note that in the hard potential case $\gamma\geq 0$, the auxiliary kernel $K_f^{\psi}$ coincides with the kernel $K_f$ and therefore, there is nothing to do in this case. The respective result is given in \cite[Corollary 4.2]{SilvestreBoltzmann}, namely
\begin{equation}\label{eq:lowerboundKf}
K_f(v,v') \gtrsim |v-v'|^{-d-2s}\int_{w\perp (v'-v)} f(v'+w)   |w|^{\gamma+2s+1} \, \dd w,
\end{equation} 
which provides a better lower bound. Nevertheless, the estimate \eqref{Kernelpsi} is sufficient for our applications in the case of soft potentials.
\begin{proof}[Proof of \autoref{lemma:kernelpsi}]
As in the proof of \cite[Corollary 4.2]{SilvestreBoltzmann}, we study the two  cases $\cos\Theta\geq 0$ and $\cos\Theta< 0$. 
\begin{enumerate}
\item[(i)] If $\cos\Theta\geq 0$, then $|w|\approx r$ and $b(\cos\Theta)\approx |v-v'|^{-d+1-2s}r^{d-1+2s}$. Hence, 
\begin{align*} 
\psi(r) b(\cos\Theta) r^{-d+2 } &\approx |v-v'|^{-d-2s+1}\left( |w|^{1+2s}\mathds{1}_{\{r\leq 1\}}(r) + |w|^{1+2s+\gamma}\mathds{1}_{\{r> 1\}}(r)\right) \\
&\geq  |v-v'|^{-d-2s+1}\min\Big( |w|^{\gamma+2s+1}, |w|^{2s+1}\Big).
\end{align*}
\item[(ii)] In the case $\cos\Theta<0$, we have $|v'-v|\approx r$ and $|w|=r\cos(\Theta/2)$. 
Therefore, $b(\cos\Theta) = \cos(\Theta/2)^{\gamma+2s+1}$. If $r\leq 1$, then 
\begin{align*}
\psi(r) b(\cos\Theta) r^{-d+2} &\approx r^{-d-2s+1} |w|^{\gamma+2s+1} \\
&\approx  |v'-v|^{-d-2s+1} |w|^{\gamma+2s+1} \\
&\geq  |v-v'|^{-d-2s+1}\min\Big( |w|^{\gamma+2s+1}, |w|^{2s+1}\Big).
\end{align*}
On the other hand, if $r\geq 1$, we have
\begin{align*}
\psi(r) b(\cos\Theta) r^{-d+2} &\approx r^{-d+2} \cos(\Theta/2)^{\gamma+2s+1}r^{\gamma} \\
%&\approx r^{-d+2} |w|^{2s+1}r^{-2s-1}\cos(\Theta/2)^{\gamma}r^{\gamma} \\
&\approx |v-v'|^{-d-2s+1}|w|^{2s+1}\cos(\Theta/2)^{\gamma}r^{\gamma}  \\
&\approx |v-v'|^{-d-2s+1}|w|^{\gamma+2s+1} \\
& \geq |v-v'|^{-d-2s+1}\min\Big( |w|^{\gamma+2s+1}, |w|^{2s+1}\Big).
\end{align*}
\end{enumerate}
This finishes the proof of \autoref{lemma:kernelpsi}.
\end{proof}
Note that by the bound on the mass and energy, a certain amount of the mass of the function $f$ lies inside a ball centered around zero with radius depending on $m_0$ and $E_0$. The bound on the entropy $H_0$ provides that this mass in not concentrated on a set of measure zero.
These observations lead to cones of nondegeneracy constructed by sets of the form $\{f\geq \ell\}$. To be more precise, for any point $v\in\R^d$, there is a symmetric cone of directions $A(v)$ such that its perpendicular planes intersect the set $\{f\geq \ell\}$ on a set with $\mathcal{H}^{d-1}$ positive Hausdorff measure. 
As a consequence of \autoref{lemma:kernelphi} resp. \eqref{eq:lowerboundKf}, 
we get the existence of a cone of non-degeneracy for the kernel $K^{\psi}_f$. 
Here, one can simply follow the lines of the proof of \cite[Lemma 4.8 and Lemma 7.1]{SilvestreBoltzmann} and use the bound on the kernel $K_f^{\psi}$ given in Lemma \ref{lemma:kernelpsi}
\begin{lemma}{\cite[Lemma 7.1]{SilvestreBoltzmann}}\label{lemma:kernelphi}
Let $\gamma>-d$ and $s\in(0,1)$.  Let $\psi$ be a non-negative function satisfying \eqref{def:psi}
and let $f$ be a non-negative function satisfying \eqref{ass:mass}, \eqref{ass:energy} and \eqref{ass:entropy}.
For any $v\in\R^d$, there exists a symmetric subset $A(v)\subset\Sp^{d-1}$ such that 
\begin{enumerate}
\item[(i)] $\displaystyle |A(v)|>\mu\langle v \rangle^{-1}$, where $|A(v)|$ denotes the $d-1$-Hausdorff measure of $A(v)$,
\item[(ii)]  $\displaystyle  K^{\psi}_f(v,v')\geq \lambda \langle v \rangle^{1+2s+\gamma}|v-v'|^{-d-2s}$ whenever $(v'-v)/(|v'-v|)\in A(v)$.
\item[(iii)] For every $\sigma\in A(v)$, $|\sigma\cdot v| \leq C$.
\end{enumerate}
The constants $\mu$, $\lambda$ and $C$ depend on $d$ and on the hydrodynamic bounds $m_0, M_0, E_0$ and $H_0$.
\end{lemma}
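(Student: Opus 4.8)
The plan is to reproduce the cone-of-nondegeneracy construction of \cite[Lemmas 4.8 and 7.1]{SilvestreBoltzmann}, now fed by the kernel lower bound of Lemma \ref{lemma:kernelpsi} (and, when $\gamma\ge 0$, by \eqref{eq:lowerboundKf}) in place of the estimate used there. First I would distill from the hydrodynamic bounds a nondegenerate sublevel set of $f$. By \eqref{ass:energy} and Markov's inequality, $\int_{\{|v|>R_0\}}f\le E_0/R_0^2$, so with $R_0:=\sqrt{2E_0/m_0}$ and \eqref{ass:mass} one gets $\int_{B_{R_0}}f\ge m_0/2$. Comparing $f\log f$ with a Gaussian yields $\int_{\Rd}f(\log f)_-\le E_0+C_d$, hence \eqref{ass:entropy} gives $\int_{\Rd}f(\log f)_+\le H_0+E_0+C_d=:\tilde H_0$. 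Splitting $B_{R_0}$ into $\{f<\delta\}$, $\{\delta\le f\le\ell_1\}$ and $\{f>\ell_1\}$, the first region contributes at most $\delta|B_{R_0}|$ and the last at most $\tilde H_0/\log\ell_1$ to $\int_{B_{R_0}}f$; choosing $\delta$ small and $\ell_1$ large in terms of $d,m_0,E_0,H_0$ makes both at most $m_0/8$, so $D:=\{\delta\le f\le\ell_1\}\cap B_{R_0}$ has $\int_D f\ge m_0/4$ and therefore $|D|\ge m_0/(4\ell_1)=:\kappa$, with $f\ge\delta$ on $D$. For any fixed $v$, the set $D_v:=D\setminus B_{c_0}(v)$, with $c_0$ a small dimensional constant, still satisfies $|D_v|\ge\kappa/2$; working with $D_v$ keeps the displacement $|u-v|$ away from zero.

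Next I would set $H_\sigma:=\{u\in\Rd\colon(u-v)\cdot\sigma=0\}$ — the hyperplane through $v$ orthogonal to $v'-v$ that carries the Carleman-type representation underlying Lemma \ref{lemma:kernelpsi}, and which does not move as $v'$ runs along the ray $v+\R_{>0}\sigma$ — and define $A(v)$ to be the set of $\sigma\in\Sd$ with $|\sigma\cdot v|\le R_0$ for which, additionally, $\mathcal{H}^{d-1}(D_v\cap H_\sigma)\ge\kappa'$, where $\kappa'>0$ is fixed below. Then (iii) is immediate with $C=R_0$, and $A(v)$ is symmetric since $H_{-\sigma}=H_\sigma$. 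For (ii), if $(v'-v)/|v'-v|=\sigma\in A(v)$ then Lemma \ref{lemma:kernelpsi}, after discarding the nonnegative contribution off $D_v$, gives
\[
K^\psi_f(v,v')\ \gtrsim\ |v-v'|^{-d-2s}\int_{D_v\cap H_\sigma}f(u)\,\min\!\big(|u-v|^{\gamma+2s+1},|u-v|^{2s+1}\big)\,\d\mathcal{H}^{d-1}(u)\ \gtrsim\ \delta\,\kappa'\,\langle v\rangle^{\gamma+2s+1}\,|v-v'|^{-d-2s},
\]
where in the last step one uses that on $D_v\cap H_\sigma\subset B_{R_0}\setminus B_{c_0}(v)$ one has $|u-v|\in[c_0,R_0+|v|]$, and $|u-v|\in[|v|-R_0,|v|+R_0]$ once $|v|\ge 2R_0$, so that $\min(|u-v|^{\gamma+2s+1},|u-v|^{2s+1})\approx\langle v\rangle^{\gamma+2s+1}$ uniformly, with implicit constants depending only on $d,\gamma,s$ and the hydrodynamic bounds. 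Thus everything reduces to the measure estimate (i) for the set $A(v)$ just defined.

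That measure estimate, which is essentially \cite[Lemma 4.8]{SilvestreBoltzmann}, is the main obstacle, and I would obtain it from the integral-geometric identity
\[
\int_{\Sd}\mathcal{H}^{d-1}\big(D_v\cap H_\sigma\big)\,\d\sigma\ =\ c_d\int_{D_v}\frac{\d u}{|u-v|},
\]
together with the observation that the left-hand integrand vanishes unless $|\sigma\cdot v|\le R_0$: indeed $D_v\cap H_\sigma\neq\emptyset$ forces $v\cdot\sigma=u\cdot\sigma$ for some $u\in B_{R_0}$, so the restriction to the band $\{|\sigma\cdot v|\le R_0\}$ is automatic. Since $|u-v|\le R_0+|v|\lesssim\langle v\rangle$ on $D_v$, the right side is $\gtrsim\kappa\langle v\rangle^{-1}$. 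On the other hand $\mathcal{H}^{d-1}(D_v\cap H_\sigma)\le\omega_{d-1}R_0^{d-1}$ and $\mathcal{H}^{d-1}(\{|\sigma\cdot v|\le R_0\})\lesssim\langle v\rangle^{-1}$, so splitting $\int_{\Sd}\mathcal{H}^{d-1}(D_v\cap H_\sigma)\,\d\sigma$ over $A(v)$ and over the band minus $A(v)$ and running a reverse-Markov argument, one fixes $\kappa'$ (in terms of $d$ and the hydrodynamic bounds) so that $\mathcal{H}^{d-1}(A(v))\gtrsim\langle v\rangle^{-1}$, which is (i), and then $\lambda\approx\delta\kappa'$ in (ii). The only change relative to \cite{SilvestreBoltzmann} is the appearance of $\min(|u-v|^{\gamma+2s+1},|u-v|^{2s+1})$ from Lemma \ref{lemma:kernelpsi} instead of the single power $|u-v|^{\gamma+2s+1}$ of \eqref{eq:lowerboundKf}; this is harmless because $|u-v|\gtrsim 1$ on $D_v$, so on the relevant slice the minimum is bounded below by a fixed positive multiple of $\langle v\rangle^{\gamma+2s+1}$, exactly as used above.
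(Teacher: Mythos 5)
Your reconstruction is correct and is essentially the route the paper itself takes: its ``proof'' of Lemma \ref{lemma:kernelphi} is the single remark that one follows \cite[Lemmas 4.8 and 7.1]{SilvestreBoltzmann} with the kernel bound replaced by Lemma \ref{lemma:kernelpsi}, and your nondegenerate sublevel set $D$, the integral-geometric identity $\int_{\Sd}\mathcal{H}^{d-1}(D_v\cap H_\sigma)\,\d\sigma=c_d\int_{D_v}|u-v|^{-1}\d u$, and the reverse-Markov selection of $A(v)$ inside the band $\{|\sigma\cdot v|\le R_0\}$ are precisely the ingredients of that argument. One caution: you integrate $f$ over the hyperplane through $v$ with weight $\min(|u-v|^{\gamma+2s+1},|u-v|^{2s+1})$, which is what makes the bound uniform as $v'$ runs along the ray $v+\R_{>0}\sigma$ and which matches \eqref{def:kernel} and the cited Corollary 4.2, whereas \eqref{def:genkernel} and Lemma \ref{lemma:kernelpsi} as literally written use $f(v'+w)$ (the transposed kernel, hyperplane through $v'$); this is a notational inconsistency of the paper rather than a gap in your argument, since the quadratic form is symmetric in $(v,v')$, but it is worth flagging.
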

The set $A(v)$ in the previous lemma describes a set of directions $A(v)$ along which the kernel $K^{\psi}_f$ has a lower bounds given in property (ii). We denote the corresponding cone of nondegeneracy by $\Xi(v)$ that is
\[ \Xi(v) := \left\{  v' \in\R^d\colon \frac{(v'-v)}{|v'-v|}\in A(v)\right\}. \]
Furthermore, the cone of nondegeneracy degenerates as $|v|\to\infty$ and satisfies
\[ |B_R(v) \cap \Xi(v)| \approx R^d \langle v \rangle^{-1} . \]

The proof of Proposition \ref{prop:quadratictoLebesgue} depends on estimating the size of the set of points in the cone of nondegeneracy so that $g(v') < g(v)/2$. The computation rather straight forward when $q \leq 0$, and slightly more involved when $q >0$. Let us start with a lemma for the easier case.

\begin{lemma} \label{l:qpositive}
Let $p$, $q$ be exponents as in Proposition \ref{prop:quadratictoLebesgue}. Assume $q \leq 0$. For a universal constant $C_1$ large enough, let $a := C_1 \|g\|_{L^p_{-q}}$. For any $v \in \R^d$, choose $R$ so that $g(v)^p R^d \langle v \rangle^{-qp-1} = a^p$. Then
\[ \left\vert\left\{ v' \in B_R(v) \cap \Xi(v) : g(v') \geq g(v)/2 \right\}\right\vert \geq c R^d \langle v \rangle^{-1}, \]
for some universal constant $c>0$.
\end{lemma}

\begin{proof}
If $R < |v|/2$, we observe that $\langle v' \rangle \approx \langle v \rangle$ for all $v' \in B_R(v)$. We use Chebyshev's inequality and get
\begin{align*}
\left\vert\left\{ v' \in B_R(v) \cap \Xi(v) : g(v') \geq g(v)/2 \right\}\right\vert &\lesssim \langle v \rangle^{pq}  g(v)^{-p} \int_{\left\{ v' \in B_R(v) \cap \Xi(v) : g(v') \geq g(v)/2 \right\}} g(v')^p \langle v' \rangle^{-pq} \dd v' \\
&\lesssim \langle v \rangle^{pq}  g(v)^{-p} \|g\|_{L^p_{-q}}^p.
\end{align*}

Our estimates on the cone of nondegeneracy say that $|B_R(v) \cap \Xi(v)| \approx R^d \langle v \rangle^{-1} = a^p g(v)^{-p} \langle v \rangle^{qp}$. Thus, we can make sure that $g(v') \geq g(v)/2$ holds for less than half of the points in $B_R(v) \cap \Xi(v)$ (in measure) by choosing $C_1$ large enough. 

If $R > |v|/2$, we repeat the same argument but replacing $B_R(v)$ with $B_{8R}(v) \setminus B_{4R}(v)$.
\end{proof}

The case $q > 0$ is the most interesting. In this case we get a version of Lemma \ref{l:qpositive} that applies only for those points $v$ so that $g(v)$ is large enough.

\begin{lemma} \label{l:goodset}
Let $p$, $q$ and $r$ be exponents as in Proposition \ref{prop:quadratictoLebesgue}. Assume $q > 0$. For a large enough constant $C_1$, let $a := C_1 \|g\|_{L^p_{-q}}$. For any $v \in \R^d$ such that $g(v) \geq a \langle v \rangle^{r}$, choose $R$ so that $g(v)^p R^d \langle v \rangle^{-qp-1} = a^p$. Then
\[ \left\vert\left\{ v' \in B_R(v) \cap \Xi(v) : g(v') \geq g(v)/2 \right\}\right\vert \geq c R^d \langle v \rangle^{-1}, \]
for some constant $c>0$.

Here, the constants $C_1$ and $c$ depend only on the dimension $d$, $s$, $\gamma$ and the macroscopic bounds $m_0,M_0,E_0$ and $H_0$
\end{lemma}

\begin{proof}
Since $g(v) \geq a\langle v \rangle^r$, we see that
\begin{align*}
 R &:= \left( a^p g(v)^{-p} \langle v \rangle^{qp+1}\right)^{1/d} \\
 &\leq \left( \langle v \rangle^{qp+1-rp}\right)^{1/d} = \langle v \rangle.
\end{align*}
The last equality holds because of our choice of $r = -d -\gamma + 1 =  q - (d-1)/p$.

Since $R \leq \langle v \rangle$, we have that $\langle v' \rangle \lesssim \langle v \rangle$ for $v' \in B_R(v)$. Therefore, if $q \geq 0$, using Chebyshev's inequality,
\begin{align*}
\left\vert\left\{ v' \in B_R(v) \cap \Xi(v) : g(v') \geq g(v)/2 \right\}\right\vert &\lesssim \langle v \rangle^{pq}  g(v)^{-p} \int_{\left\{ v' \in B_R(v) \cap \Xi(v) : g(v') \geq g(v)/2 \right\}} f(v')^p \langle v' \rangle^{-pq} \dd v' \\
&\lesssim \langle v \rangle^{pq}  g(v)^{-p} \|g\|_{L^p_{-q}}^p
\end{align*}

Our estimates on the cone of nondegeneracy say that $|B_R(v) \cap \Xi(v)| \approx R^d \langle v \rangle^{-1} = a^p g(v)^{-p} \langle v \rangle^{qp}$. Thus, we can make sure that $g(v') \geq g(v)/2$ holds for less than half of the points in $B_R(v) \cap \Xi(v)$ (in measure) by choosing $C_1$ large enough.

\end{proof}

We can finally prove \autoref{prop:quadratictoLebesgue}.
\begin{proof}[Proof of \autoref{prop:quadratictoLebesgue}]
We describe the proof in the more interesting case $q > 0$. The case $q \leq 0$ follows the same steps applying Lemma \ref{l:qpositive} instead of Lemma \ref{l:goodset}.

% We initially assume that $g \in L^p_{-q}(\R^d)$ and prove the estimate. A posteriori, we prove that the quadratic form must be infinite if $g \notin L^p_{-q}(\R^d)$ by an approximation procedure using that $g \in L^1_2(\R^d)$.

Recall that by \autoref{lemma:kernelphi} for every $v\in\R^d$, we know that $K^{\psi}_f(v,v') \geq \lambda \langle v \rangle^{1+2s+\gamma}|v-v'|^{-d-2s}$, whenever $v'\in \Xi(v)$. 
Furthermore, if $g(v) \geq a \langle v \rangle^r$, let us choose $R=R(v)>0$ like in Lemma \ref{l:goodset}, so that for some large $C_1>0$,
\[ g(v)^p R^d \langle v\rangle^{-qp-1} = a^p = C_1^p \|g\|_{L^p_{-q}}^p. \]
With this choice, applying Lemma \ref{l:goodset},
\[|\{v'\in (B_R(v)\cap \Xi(v))\colon g(v') \leq g(v)/2\}| \gtrsim  |B_R(v)\cap \Xi(v)| \approx R^d \langle v \rangle^{-1}.	\]
Hence, using this information, we get
\begin{align*}
\int_{\R^d} \left( \sqrt{g(v')} - \sqrt{g(v)} \right)^2K_f^{\psi}(v,v')\, \dd v'  & \gtrsim R^d \langle v \rangle^{-1} g(v) \left(\lambda \langle v\rangle^{1+2s+\gamma}R^{-d-2s} \right) \\
& \geq c_1 R^{-2s} g(v) \langle v \rangle^{\gamma+2s}  \\
& = c_2 \|g\|_{L^p_{-q}}^{-2sp/d} g(v)^{1+2sp/d}\langle v \rangle^{\gamma+2s-2s(qp+1)/d},
\end{align*}
where $c_1,c_2$ are constants depending on $m_0, M_0, E_0, H_0$.
Our choice of $p$ and $q$ was made so that $p=1+2sp/d$ and $-qp =\gamma+2s-2s(qp+1)/d$. 
Integrating over all those $v\in\R^d$ so that $g(v) \geq a\langle v \rangle^r$ finally gives us
\begin{align*} \int_{\R^d}\int_{\R^d} \left( \sqrt{g(v')} - \sqrt{g(v)} \right)^2K_f^{\psi}(v,v')\, \dd v' \dd v   &\geq c\|g\|_{L^p_{-q}}^{-2sp/d} 
\int_{g(v) \geq a \langle v \rangle^r} g(v)^p \langle v \rangle^{-qp} \dd v. 
\end{align*}
Since $-2sp/d = 1-p$, we conclude the proof. 
\end{proof}

When $q>0$, the estimate from Proposition \ref{prop:quadratictoLebesgue} needs to be improved to account for those points $v$ so that $g(v) \leq a \langle v \rangle^r$. This is the purpose of the next Lemma.

\begin{lemma} \label{lem:smallvalues}
Let $p$, $q$ and $r$ be as in Proposition \ref{prop:quadratictoLebesgue}. Assume $q > 0$ and $g \in L^p_{-q}(\R^d) \cap L^1(\R^d)$. Then
\[ \int_{\{g(v) < a \langle v \rangle^r \}} |g(v)|^p \langle v \rangle^{-qp} \dd v \leq a^{p-1} \int_{\R^d} |g(v)| \dd v.\]
\end{lemma}

\begin{proof}
We simply bound the integrand $|g(v)|^p < a^{p-1} \langle v \rangle^{(p-1)r} g(v)$ for every $v$ and proceed
\begin{align*}
\int_{\{g(v) < a \langle v \rangle^r \}} |g(v)|^p \langle v \rangle^{-qp} \dd v &\leq a^{p-1} \int_{\R^d} |g(v)| \langle v \rangle^{-qp + (p-1)r} \dd v.
\end{align*}
We finish the proof by observing that $-qp + (p-1)r = p \gamma (1-2s/d) < 0$ with our choice of exponents when $q > 0$ (and in particular $\gamma<0$).
\end{proof}

% \begin{remark}
% Note that with the choice of exponents $p$, $q$ and $r$ of Proposition \ref{prop:quadratictoLebesgue}, we cannot bound the left hand side in Lemma \ref{lem:smallvalues} by $a^p$. However, one can easily show that
%\[ \int_{\{g(v) < a \langle v \rangle^r \}} |g(v)|^p \langle v \rangle^{-\tilde qp} \dd v \lesssim a^p,\]
%for any $\tilde q$ so that $-\tilde qp + rp < -d$. This would be for $\tilde q > 1 - 2s - \gamma$. Therefore, from \autoref{prop:quadratictoLebesgue}, we could deduce
%\begin{align*}
%\int_{\R^d}\int_{\R^d} \left(\sqrt{g(v')} - \sqrt{g(v)}\right)^2 K^{\psi}_f(v,v') \, \dd v' \, \dd v &\geq c_0 \|g\|_{L^p_{-q}}^{1-p} \int_{\{g(v) \geq a \langle v \rangle^r \}} |g(v)|^p \langle v \rangle^{-qp} \dd v \\
%&\geq c_0 \|g\|_{L^p_{-q}} - c_0 \|g\|_{L^p_{-q}}^{1-p} \int_{\{g(v) \leq a \langle v \rangle^r \}} |g(v)|^p \langle v \rangle^{-qp} \dd v \\
%&\geq c_0 \|g\|_{L^p_{-q}} - c_0 \|g\|_{L^p_{-q}}^{1-p} \|g\|_{L^p_{-\tilde q}}^p \\
%&\geq c_0 \|g\|_{L^p_{-q}} - c_0 C_1^p \|g\|_{L^p_{-\tilde q}}
%\end{align*}
%\end{remark}

Using \autoref{lem:entropuquadratic}, \autoref{prop:quadratictoLebesgue} and \autoref{lem:smallvalues}, we derive \autoref{thm:entropydissipation}.

\begin{proof}[Proof of \autoref{thm:entropydissipation}]
Let us start by assuming that $f \in L^p_{-q}(\R^d)$ and deduce the a priori estimate. From \autoref{lem:entropuquadratic}, we have that
\[ D(f) \geq \int_{\R^d}\int_{\R^d} \left(\sqrt{f(v')} - \sqrt{f(v)}\right)^2 K^{\psi}_f(v,v') \, \dd v' \, \dd v  - C \left( \int_{\R^d} f(v) \langle v \rangle^{\gamma_+} \dd v \right)^2. \]
Moreover, applying \autoref{prop:quadratictoLebesgue}, we estimate the double integral as
\begin{align*}
\int_{\R^d}\int_{\R^d} & \left(\sqrt{f(v')} - \sqrt{f(v)}\right)^2 K^{\psi}_f(v,v') \, \dd v' \, \dd v \geq c_0 \|f\|_{L^p_{-q}}^{1-p} \int_{\{f(v) \geq a \langle v \rangle^r \}} |f(v)|^p \langle v \rangle^{-qp} \dd v \\
&\geq c_0 \left( \|f\|_{L^p_{-q}} - \|f\|_{L^p_{-q}}^{1-p} \int_{\{f(v) \leq a \langle v \rangle^r \}} |f(v)|^p \langle v \rangle^{-qp} \dd v \right).
\end{align*}

Combining these two inequalities with \autoref{lem:smallvalues}, we are left with
\[ D(f) \geq c_0 \|f\|_{L^p_{-q}} - c_0 C_1^{p-1} M_0 - C \left( \int_{\R^d} f(v) \langle v \rangle^{\gamma_+} \dd v \right)^2. \]
The constants $c_0$ and $C_1$ are chosen sufficiently small and sufficiently large respectively in Proposition \ref{prop:quadratictoLebesgue}. We may choose $c_0$ smaller if necessary so that $c_0 C_1^{p-1} < M_0$. Thus,
\[ D(f) \geq c_0 \|f\|_{L^p_{-q}} - M_0^2 - C \left( \int_{\R^d} f(v) \langle v \rangle^{\gamma_+} \dd v \right)^2. \]
This concludes the proof of \autoref{thm:entropydissipation} when $f \in L^p_{-q}(\R^d)$.

%%%%%%%%%%%%%%%%%%%%%%%%%%%%%%%%%%%%%%%%%%%%
For a function $f \notin L^p_{-q}$, we consider $f_m(v) := \min(f(v),m)$. Since $f \in L^1_2(\R^d)$, we have $f_m \in L^p_{-q}$ and \autoref{prop:quadratictoLebesgue} holds for $g=f_m$. Moreover, $\|f_m\|_{L^p_{-q}} \to \infty$ and therefore, applying the monotone convergence theorem,
\begin{align*}
\int_{\R^d}\int_{\R^d} &\left( \sqrt{f(v')} - \sqrt{f(v)} \right)^2 K_f^{\psi}(v,v')\, \dd v' \dd v \geq \\
& \geq \lim_{m \to \infty} \int_{\R^d}\int_{\R^d} \left( \sqrt{f_m(v')} - \sqrt{f_m(v)} \right)^2K_f^{\psi}(v,v')\, \dd v' \dd v \\
& \geq \lim_{m \to \infty} c_0 \|f_m\|_{L^p_{-q}}^{1-p} \int_{\{f(v) \geq a \langle v \rangle^r \}} |f_m(v)|^p \langle v \rangle^{-qp} \dd v = +\infty. 
\end{align*}
In view of \autoref{lem:entropuquadratic}, we must have $D(f)=+\infty$.
\end{proof}

\subsection{Entropy dissipation estimate involving an anisotropic fractional Sobolev space}
In this subsection, we present a second entropy dissipation estimate for the Boltzmann collision operator. 
This estimate involves the anisotropic distance by Gressmann and Strain \cite{Gressmann-Strain-2011}.

Let us first briefly recall the sharp anisotropic coercivity estimate for the Boltzmann collision operator from \cite{Gressmann-Strain-2011}.
Note that $\left\langle Q(g,f),f\right\rangle$ can be rewritten as
\begin{equation}\label{eq:NgKg}
\begin{aligned}
\left\langle Q(g,f),f\right\rangle &= \int_{\R^d}\int_{\R^d}\int_{\Sp^{d-1}} g(v_{\ast})f(v) \left[f(v')-f(v)\right] B(r,\cos\Theta)\dd\sigma \dd v_{\ast} \dd v \\
& = \frac12 \int_{\R^d}\int_{\R^d}\int_{\Sp^{d-1}} g(v_{\ast})\left[f(v')^2 -f(v)^2\right] B(r,\cos\Theta)\dd\sigma \dd v_{\ast} \dd v \\
& \quad \qquad -\frac12 \int_{\R^d}\int_{\R^d}\int_{\Sp^{d-1}} g(v_{\ast})\left[f(v')-f(v)\right]^2 B(r,\cos\Theta)\dd\sigma \dd v_{\ast} \dd v\\
& =:K_g(f) - N_g(f).
\end{aligned}
\end{equation}

In \cite[Theorem 1]{Gressmann-Strain-2011}, the authors prove that under mild assumptions on the function $g$, the term $N_g(f)$ can be bounded from below the weighted anisotropic Sobolev semi-norm
$|f|_{{\dot{N}^{s,\gamma}}}^2$ defined in \eqref{def:weightedanisoSobolev}. This estimate provides an entropy dissipation estimate in terms of the anisotropic fractional Sobolev space. However, the result depends on certain parameter $C_q$ that would be difficult to bound when $\gamma<0$. In this section we prove Proposition \ref{prop:entropydissipation}, which is effectively a refinement of \cite[Theorem 1]{Gressmann-Strain-2011} in the soft potential range.

In \cite{imbertsilvestreglobal2022}, Imbert and Silvestre introduce a change of variables, which is used to turn local Hölder and Schauder estimates into global ones. For $v_0\in\R^d$ let 
\[ \overline{v} :=  \begin{cases} v_0 + v  \quad & \text{if } |v_0|<2, \\ v_0+T_0v &  \text{if } |v_0|\geq 2,\end{cases} \]
 where
 \begin{equation}\label{def:T0}
  T_0(av_0+w) = \frac{a}{|v_0|}v_0 + w, \qquad \text{for all } w\perp v_0, a\in\R. 
 \end{equation}
 The function $T_0:\R^d\to\R^d$, introduced in \cite{imbertsilvestreglobal2022}, has a strong connection to the anisotropic distance 
\eqref{eq:GSdist}.
In \cite[Lemma A.1]{imbertsilvestreglobal2022} it is shown that for any given $v_0\in\R^d$ with $|v_0|\geq 2$, we have 
\begin{equation}\label{eq:comparabilitymetric}
d_{GS}(v_1,v_2) \asymp |T_0^{-1}(v_1-v_2)|
\end{equation}
for all $v_1,v_2\in E_1(v_0) := v_0+T_0(B_1)$. 
We define
\begin{equation}\label{def:Kbar}
\overline{K}^{\psi}_f(v,v') = |v_0|^{-1-\gamma-2s}K^{\psi}_f(\overline{v}, v_0+T_0v'),
\end{equation}
where $\psi$ is the auxiliary function $\psi:\R^d\to\R$ defined in
\eqref{def:psi}. 
In \cite{imbertsilvestreglobal2022}, the authors derive the global coercivity estimate by Gressmann and Strain by using the above mentioned transformation and a local coercivity estimate. 
By using similar methods, we are able to prove \autoref{prop:entropydissipation}.
Before we draw our attention to the proof of \autoref{prop:entropydissipation}, we need some auxiliary results.
Let us first state a local coercivity estimate.
\begin{lemma}\label{lemma:localcoercivity}
Let $\gamma>-d$ and $s\in(0,1)$. Let $\psi$ be a non-negative function satisfying $\psi\leq \Phi$ and \eqref{def:psi} and let $f$ be a non-negative function satisfying \eqref{ass:mass}, \eqref{ass:energy} and \eqref{ass:entropy}. 
There is a constant $\lambda>0$, depending on the macroscopic bounds $m_0, M_0, E_0$ and $H_0$, such that for every $g:\R^d\to\R$
\[ \int_{B_1}\int_{B_1} \left(g(v') - g(v)\right)^2 \overline{K}^{\psi}_f(v,v') \dd v' \dd v \geq \lambda \int_{B_{1/2}}\int_{B_{1/2}} \frac{\left(g(v') - g(v)\right)^2}{|v-v'|^{d+2s}} \dd v' \dd v. \]
\end{lemma}
\begin{proof} 
By \autoref{lemma:kernelphi}, there is a cone of non-degeneracy for the kernel $K^{\psi}_f$. Hence, following the lines of the proof of \cite[Lemma 5.6]{imbertsilvestreglobal2022}, there is also a cone of non-degeneracy for $\overline{K}^{\psi}_f$. 
Now the result follows from the coercivity condition \cite[Theorem 1.3]{chakersilvestrecoerc}.
\end{proof}
Let $v_0\in\R^d\setminus B_2$ be given.
For $v\in\R^d$, let $\overline{v}$ be such that $v=v_0+T_0(\overline{v})$, where $T_0$ is defined in \eqref{def:T0}. Furthermore, let $\overline{g}(v)= g(\overline{v})$.
\begin{lemma}\label{lemma:coerc1}
Let $\gamma>-d$ and $s\in(0,1)$. Let $\psi$ be a non-negative function satisfying $\psi\leq \Phi$ and \eqref{def:psi} and let $f$ be a non-negative function satisfying \eqref{ass:mass}, \eqref{ass:energy} and \eqref{ass:entropy}. 
There are $c>0$, $R\in (2,\infty)$ and $\rho\in (0,1]$, depending on the macroscopic bounds $m_0, M_0, E_0$ and $H_0$, such that for all $g:\R^d\to\R$
\begin{align*} 
\iint_{d_{GS}(v,v')<R} &(g(v)- g(v'))^2 K_f^{\psi}(v,v')\,\, \dd v' \, \dd v   \\
& \geq c\iint_{d_{GS}(v,v')<\rho}  (g(v)- g(v'))^2\, \frac{\left( \langle v \rangle \langle v' \rangle \right)^{(\gamma+2s+1)/2}}{d_{GS}(v,v')^{d+2s}}\, \dd v' \, \dd v . 
\end{align*}
\end{lemma}
\begin{proof} The proof follows as in \cite[Lemma A.2]{imbertsilvestreglobal2022}. 
It uses the transformation \eqref{def:T0}, the comparability \eqref{eq:comparabilitymetric} and the local coercivity estimate \autoref{lemma:localcoercivity}.
Having these results on hand, one can proceed in the exact same way as it is done in \cite[Lemma A.2]{imbertsilvestreglobal2022}. 
\end{proof}

The integral of Lemma \ref{lemma:coerc1} is over the points $v,v' \in \R^d$ so that $d_{GS}(v,v') < \rho$. This value of $\rho$ is not important. The next lemma shows that we can enlarge it at will without altering the result.

\begin{lemma} \label{lem:rho32rho}
Let $g : \R^d \to \R$ be any measurable function, $q \in \R$ and $\rho < 2$. The following inequality holds for some $c>0$ depending only on dimension, $s$ and $q$.
\begin{align*}
\iint_{d_{GS}(v,v')<\rho}  (g(v)- g(v'))^2\, \frac{\left( \langle v \rangle \langle v' \rangle \right)^q}{d_{GS}(v,v')^{d+2s}}\, \dd v' \, \dd v & \\
\geq c \iint_{d_{GS}(v,v')< \frac 32 \rho}  (g(v)- g(v'))^2\, &\frac{\left( \langle v \rangle \langle v' \rangle \right)^q}{d_{GS}(v,v')^{d+2s}}\, \dd v' \, \dd v.
\end{align*}
\end{lemma}

\begin{proof}
Given $v,v' \in \R^d$ so that $d_{GS}(v,v') < \frac 32 \rho$, define
\[ N(v,v') := \left\{ w \in \R^d : d_{GS}(v,w) < \frac 23 d_{GS}(v,v') \text{ and } d_{GS}(v',w) < \frac 23 d_{GS}(v,v') \right\}.\]

From the triangle inequality, we observe that $d_{GS}(v,w) > \frac 13 d_{GS}(v,v')$ for all $w \in N(v,v')$. Moreover, since $d_{GS}(v,v') < \frac 32 \rho < 3$, we also see that $\langle v \rangle \approx \langle v' \rangle \approx \langle w \rangle$ for all $w \in N(v,v')$.

Let us also define $M(v,w) := \{ v' \in \R^d: w \in N(v,v')\}$. The sets $N(v,v')$ and $M(v,w)$ are substantial portions of a ball with respect to the distance $d_{GS}$. It is not hard to estimate their volumes $|N(v,v')| \approx |M(v,w)| \approx \langle v \rangle^{-1} d_{GS}(v,v')^d$.

With this notation, we proceed with the computation
\begin{align*}
\iint_{d_{GS}(v,v')< \frac 32 \rho}  &(g(v)- g(v'))^2\, \frac{\left( \langle v \rangle \langle v' \rangle \right)^q}{d_{GS}(v,v')^{d+2s}}\, \dd v' \, \dd v \\
&= \iint_{d_{GS}(v,v')< \frac 32 \rho}  (g(v)- g(v'))^2\, \frac{\left( \langle v \rangle \langle v' \rangle \right)^q}{d_{GS}(v,v')^{d+2s}}\, |N(v,v')|^{-1} \int_{N(v,v')} \dd w \dd v' \, \dd v. \\
\intertext{Using that $(g(v)- g(v'))^2 \leq 2(g(v)- g(w))^2 + 2(g(w)- g(v'))^2$ and $|N(v,v')| \approx \langle v \rangle^{-1} d_{GS}(v,v')^d$}
&\lesssim \iint_{d_{GS}(v,v')< \frac 32 \rho}\int_{N(v,v')}  \left( (g(v)- g(w))^2 + (g(w)- g(v'))^2 \right) \, \frac{ (\langle v \rangle \langle v' \rangle)^{q+1/2}}{d_{GS}(v,v')^{2d+2s}}\, \dd w \dd v' \, \dd v \\
\intertext{Note the symmetry respect to $v$ and $v'$ and that $\langle v \rangle \approx \langle v' \rangle \approx \langle w \rangle$.}
& \approx \iint_{d_{GS}(v,w)<\rho} (g(v)- g(w))^2 \frac{ \langle v \rangle^{2q+1}}{d_{GS}(v,w)^{2d+2s}} \left( \int_{v' \in M(v,w)} \dd v' \right) \dd w \dd v \\
& \approx \iint_{d_{GS}(v,w)<\rho} (g(v)- g(w))^2 \frac{ \langle v \rangle^{q} \langle w \rangle^{q}}{d_{GS}(v,w)^{d+2s}} \dd w \dd v.
\end{align*}
\end{proof}

\begin{corollary} \label{cor:rhoisirrelevant}
Let $g : \R^d \to \R$ be any measurable function, $q \in \R$ and $\rho < 1$. The following inequality holds for some $c>0$ depending only on dimension, $s$, $q$ and $\rho$.
\begin{align*}
\iint_{d_{GS}(v,v')<\rho}  (g(v)- g(v'))^2\, \frac{\left( \langle v \rangle \langle v' \rangle \right)^q}{d_{GS}(v,v')^{d+2s}}\, \dd v' \, \dd v & \\
\geq c \iint_{d_{GS}(v,v')< 1}  (g(v)- g(v'))^2\, &\frac{\left( \langle v \rangle \langle v' \rangle \right)^q}{d_{GS}(v,v')^{d+2s}}\, \dd v' \, \dd v.
\end{align*}
\end{corollary}

\begin{proof} We iterate Lemma \ref{lem:rho32rho} $m$ times so that $(3/2)^m \rho \geq 1$.
\end{proof}

We finally have all tools to proof our main result of this section, that is \autoref{prop:entropydissipation}.
\begin{proof}[Proof of \autoref{prop:entropydissipation}]
Let $\psi$ be a non-negative satisfying $\psi\leq \Phi$ and \eqref{def:psi}. 
Proceeding as in the proof of \autoref{lem:entropuquadratic},
\[ D(f) \geq  N_f^{\varphi}(\sqrt{f}) - C M_0^2,\]
where $C>0$ is a finite universal constant and
\[ N_f^{\psi}(\sqrt{f}) = \int_{\R^d}\int_{\R^d} \left(\sqrt{f(v')} - \sqrt{f(v)}\right)^2 K_f^{\psi}(v,v') \dd v' \dd v.\]

By \autoref{lemma:coerc1}, there are $c_1>0$ and $\rho\in(0,1)$, depending on $m_0, M_0, E_0$ and $H_0$, such that
\[ N_f^{\psi}(\sqrt{f}) \geq c_1 \iint_{d_{GS}(v,v')<\rho}  (\sqrt{f(v)}- \sqrt{f(v')})^2\, \frac{\left( \langle v \rangle \langle v' \rangle \right)^{(\gamma+2s+1)/2}}{d_{GS}(v,v')^{d+2s}}\, \dd v' \, \dd v.\]

Because of Corollary \ref{cor:rhoisirrelevant}, we can replace $\rho$ in the formula above by $1$ by adjusting the constant $c_1$. We get
\[ N_f^{\psi}(\sqrt{f}) \geq c_2 \iint_{d_{GS}(v,v')<1}  (\sqrt{f(v)}- \sqrt{f(v')})^2\, \frac{\left( \langle v \rangle \langle v' \rangle \right)^{(\gamma+2s+1)/2}}{d_{GS}(v,v')^{d+2s}}\, \dd v' \, \dd v.\]
Therefore, we conclude that
\[ D(f) \geq c |\sqrt{f}|_{\dot{N}^{s,\gamma}}^2 - C M_0^2. \]

\end{proof}

\bibliographystyle{abbrv}
\bibliography{lit}

\begin{thebibliography}{10}

\bibitem{AlexDesvVilWenn}
R.~Alexandre, L.~Desvillettes, C.~Villani, and B.~Wennberg.
\newblock Entropy dissipation and long-range interactions.
\newblock {\em Arch. Ration. Mech. Anal.}, 152(4):327--355, 2000.

\bibitem{AlexandreVillani2002boltzmann}
R.~Alexandre and C.~Villani.
\newblock On the {B}oltzmann equation for long-range interactions.
\newblock {\em Communications on Pure and Applied Mathematics: A Journal Issued
  by the Courant Institute of Mathematical Sciences}, 55(1):30--70, 2002.

\bibitem{alexandre2004landau}
R.~Alexandre and C.~Villani.
\newblock On the {L}andau approximation in plasma physics.
\newblock {\em Ann. Inst. H. Poincar\'{e} C Anal. Non Lin\'{e}aire},
  21(1):61--95, 2004.

\bibitem{chakersilvestrecoerc}
J.~Chaker and L.~Silvestre.
\newblock Coercivity estimates for integro-differential operators.
\newblock {\em Calc. Var. Partial Differential Equations}, 59(4):Paper No. 106,
  20, 2020.

\bibitem{desvillettes1992asymptotics}
L.~Desvillettes.
\newblock On asymptotics of the {B}oltzmann equation when the collisions become
  grazing.
\newblock {\em Transport Theory Statist. Phys.}, 21(3):259--276, 1992.

\bibitem{Des03Fourier}
L.~Desvillettes.
\newblock {A}bout the use of the {F}ourier transform for the {B}oltzmann
  equation.
\newblock {\em Riv. Mat. Univ. Parma}, 7(2):1--99, 2003.

\bibitem{DesvillettesColoumb}
L.~Desvillettes.
\newblock Entropy dissipation estimates for the {L}andau equation in the
  {C}oulomb case and applications.
\newblock {\em J. Funct. Anal.}, 269(5):1359--1403, 2015.

\bibitem{desvillettes2005trend}
L.~Desvillettes and C.~Villani.
\newblock On the trend to global equilibrium for spatially inhomogeneous
  kinetic systems: the {B}oltzmann equation.
\newblock {\em Inventiones mathematicae}, 159(2):245--316, 2005.

\bibitem{DiPernaLionsCauchy}
R.~J. DiPerna and P.-L. Lions.
\newblock On the {C}auchy problem for {B}oltzmann equations: global existence
  and weak stability.
\newblock {\em Ann. of Math. (2)}, 130(2):321--366, 1989.

\bibitem{golse2019partial}
F.~Golse, M.~P. Gualdani, C.~Imbert, and A.~Vasseur.
\newblock Partial regularity in time for the space homogeneous {L}andau
  equation with {C}oulomb potential.
\newblock {\em arXiv preprint arXiv:1906.02841}, 2019.

\bibitem{goudon1997boltzmann}
T.~Goudon.
\newblock On {B}oltzmann equations and {F}okker-{P}lanck asymptotics: influence
  of grazing collisions.
\newblock {\em J. Statist. Phys.}, 89(3-4):751--776, 1997.

\bibitem{Gressmann-Strain-Global2010}
P.~T. Gressman and R.~M. Strain.
\newblock Global classical solutions of the {B}oltzmann equation with
  long-range interactions.
\newblock {\em Proc. Natl. Acad. Sci. USA}, 107(13):5744--5749, 2010.

\bibitem{GressmanStrainGlobal}
P.~T. Gressman and R.~M. Strain.
\newblock Global classical solutions of the {B}oltzmann equation without
  angular cut-off.
\newblock {\em J. Amer. Math. Soc.}, 24(3):771--847, 2011.

\bibitem{Gressmann-Strain-2011}
P.~T. Gressman and R.~M. Strain.
\newblock {S}harp anisotropic estimates for the {B}oltzmann collision operator
  and its entropy production.
\newblock {\em Adv. Math.}, 227(6):2349--2384, 2011.

\bibitem{ImbSilReg20}
C.~Imbert and L.~Silvestre.
\newblock Regularity for the {B}oltzmann equation conditional to macroscopic
  bounds.
\newblock {\em EMS Surv. Math. Sci.}, 7(1):117--172, 2020.

\bibitem{imbertsilvestreglobal2022}
C.~Imbert and L.~E. Silvestre.
\newblock Global regularity estimates for the {B}oltzmann equation without
  cut-off.
\newblock {\em J. Amer. Math. Soc.}, 35(3):625--703, 2022.

\bibitem{mouhot2006explicit}
C.~Mouhot.
\newblock Explicit coercivity estimates for the linearized {B}oltzmann and
  {L}andau operators.
\newblock {\em Comm. Partial Differential Equations}, 31(7-9):1321--1348, 2006.

\bibitem{SilvestreBoltzmann}
L.~Silvestre.
\newblock A new regularization mechanism for the {B}oltzmann equation without
  cut-off.
\newblock {\em Comm. Math. Phys.}, 348(1):69--100, 2016.

\bibitem{Villaninewclass}
C.~Villani.
\newblock On a new class of weak solutions to the spatially homogeneous
  {B}oltzmann and {L}andau equations.
\newblock {\em Arch. Rational Mech. Anal.}, 143(3):273--307, 1998.

\bibitem{VillaniReg}
C.~Villani.
\newblock Regularity estimates via the entropy dissipation for the spatially
  homogeneous {B}oltzmann equation without cut-off.
\newblock {\em Rev. Mat. Iberoamericana}, 15(2):335--352, 1999.

\end{thebibliography}

\end{document}